\numberwithin{theorem}{section}
\newcommand{\TheTitle}{Worst-case performance of first-order methods} 
\newcommand{\TheAuthors}{A.B.~Taylor, J.M.~Hendrickx, F.~Glineur}
\headers{\TheTitle}{\TheAuthors}
\title{Exact Worst-case Performance of First-order Methods for Composite Convex Optimization\thanks{This paper
presents research results of the Belgian Network DYSCO (Dynamical Systems, Control, and Optimization), funded by the Interuniversity Attraction Poles Programme, initiated by the Belgian State, Science Policy Office, and of the Concerted Research Action (ARC) programme supported by the Federation Wallonia-Brussels (contract ARC 14/19-060). The scientific responsibility rests with its authors. The first author is a FRIA fellow.}
}
\author{
  Adrien B. Taylor\thanks{Universit\'e catholique de Louvain, Information and Communication Technologies, Electronics and Applied Mathematics (ICTEAM) Institute, B-1348 Louvain-la-Neuve, Belgium (adrien.taylor@uclouvain.be, julien.hendrickx@uclouvain.be).}
  \and
  Julien M. Hendrickx\footnotemark[2]
  \and
  Fran\c cois Glineur\thanks{Universit\'e catholique de Louvain, Center for Operations Research and Econometrics (CORE),
  	B-1348 Louvain-la-Neuve, Belgium, and Universit\'e catholique de Louvain, Information and Communication Technologies, Electronics and Applied Mathematics (ICTEAM) Institute, B-1348 Louvain-la-Neuve, Belgium (francois.glineur@uclouvain.be).}
}
\DeclareMathOperator{\dom}{dom}
\newcommand{\figref}[1]{\figurename~\ref{#1}}
\newcommand{\secref}[1]{section~\ref{#1}}
\newcommand{\trace}[1]{\text{Tr}\left(#1\right)}
\newcommand{\argmin}[1]{\underset{#1}{\text{argmin }}}
\newcommand{\prox}[2]{{\mathrm{prox}}_{#1}\left(#2\right)}
\newcommand{\pinf}{\infty}
\newcommand{\cinf}{\cup\left\{\pinf\right\}}
\newcommand{\R}{\mathbb{R}}
\newcommand{\Rinf}{\mathbb{R}\cinf}
\newcommand{\E}{\mathbb{E}}
\newcommand{\Es}{\mathbb{E}^*}
\newcommand{\normstd}[1]{{\left\lVert#1\right\rVert}}
\newcommand{\normp}[1]{{\left\lVert#1\right\rVert_{\E}}}
\newcommand{\normd}[1]{{\left\lVert#1\right\rVert_{\Es}}}
\newcommand{\normdsqsm}[1]{{\lVert#1\rVert_{\Es}^2}}
\newcommand{\normpsq}[1]{{\left\lVert#1\right\rVert_{\E}^2}}
\newcommand{\normdsq}[1]{{\left\lVert#1\right\rVert_{\Es}^2}}
\newcommand{\inner}[2]{{\left< #1, #2\right>}}
\newcommand{\innerS}[2]{{\langle #1, #2 \rangle}}
\newcommand{\innerE}[2]{{\langle #1, #2 \rangle}_{\E}}
\newcommand{\innerEs}[2]{{\langle #1, #2 \rangle}_{\Es}}
\newcommand{\inneraEs}[2]{{\left< #1, #2 \right>}_{\Es}}
\newcommand{\fdef}{\E\rightarrow\Rinf}
\newcommand{\fsdef}{\Es\rightarrow\Rinf}
\newcommand{\modAT}[1]{{#1}}
\newcommand{\revd}[1]{{#1}}
\newcommand{\rev}[1]{{#1}}
\newcommand{\revv}[1]{{#1}}
\newcommand{\del}[1]{#1}
\begin{document}

\begin{center}
\fbox{
	\parbox{12cm}{
		\setlength{\parskip}{.5cm}
		Published in {\bfseries SIAM Journal on Optimization}, DOI 10.1137/16M108104X.
		{Compared to \href{https://epubs.siam.org/doi/abs/10.1137/16M108104X}{SIAM's version}, one typo corrected in {\bfseries\color{red}red} in Theorem~\ref{thm:noncvxinterp}.}
	}}\vspace{2cm}
\end{center}

\maketitle

\begin{abstract}
We provide a framework for computing the exact worst-case performance of any algorithm belonging to a broad class of oracle-based first-order methods for composite convex optimization, including those performing explicit, projected, proximal, conditional, and inexact (sub)gradient steps. We simultaneously obtain tight \rev{worst-case guarantees} and explicit \revv{instances of optimization} problems on which the algorithm reaches this worst-case. We achieve this by reducing the computation of the worst-case to solving a convex semidefinite program, generalizing previous works on performance estimation by Drori and Teboulle~\cite{Article:Drori} and the authors~\cite{taylor2015smooth}.

We use these developments to obtain a tighter analysis of the proximal point algorithm and of several variants of fast proximal gradient, conditional gradient, subgradient, and alternating projection methods. In particular, we present a new analytical worst-case guarantee for the proximal point algorithm that is twice better than previously known and improve the standard \rev{worst-case} guarantee for the conditional gradient method by more than a factor of two.

\del{We also show how the optimized gradient method proposed by Kim and Fessler~\cite{kim2014optimized} can be extended by incorporating a projection or a proximal operator, which leads to an algorithm that converges in the worst-case twice as fast as the standard accelerated proximal gradient method~\cite{beck2009fast}.}
\end{abstract}

\begin{keywords}
convex optimization, composite convex optimization, first-order methods, worst-case analysis, performance estimation, semidefinite programming, convex interpolation
\end{keywords}

\section{Introduction}
Consider the composite convex minimization problem
\begin{equation}
\min_{x\in \E} \left\{F(x)\equiv \sum_{k=1}^{n} F^{(k)}(x) \right\},\tag{CM}\label{eq:origOpt}
\end{equation}where $\E$ is a finite-dimensional real vector space and each functional component\break $F^{(k)}:\fdef$ is a convex function belonging to some class $\mathcal{F}_k(\E)$---e.g., smooth or nonsmooth, strongly convex or not, indicator functions---for which some operations are assumed to be available in closed form (e.g., computing a gradient, projecting on the domain, computing a proximal step).

We are interested in the composite optimization problem~\eqref{eq:origOpt} because it naturally allows representing and exploiting a lot of the structure in many problems, which can play a major role in our ability to efficiently solve them (see~\cite{nesterov2007gradient} among others). In addition, the class of composite convex optimization problems arises very commonly in practice, as it contains, for example, constrained, $\ell_1$- and $\ell_2$-regularized convex optimization problems.

We focus on black-box oracle-based algorithms that use first-order information to approximately solve~\eqref{eq:origOpt} and in particular on obtaining \revv{exact and global} worst-case guarantees \revv{on} their performances. That is, for a given algorithm, we simultaneously seek to obtain \rev{worst-case guarantees}---for example, on objective function accuracy---and an instance of~\eqref{eq:origOpt} for which the algorithm behaves as such. In this work, we investigate fixed-step linear first-order methods (FSLFOM), which include among others fixed-step projected, proximal, conditional, and inexact (sub)gradient methods.

This work builds on the recent idea of performance estimation, first developed by Drori and Teboulle in~\cite{Article:Drori} and followed up on by Kim and Fessler~\cite{kim2014optimized} and the authors~\cite{taylor2015smooth}. The approach was initially tailored for obtaining upper bounds on the worst-case behavior of fixed-step gradient methods for unconstrained minimization of a single smooth convex objective function. Motivated by subsequent results (see among others~\cite{kim2015convergence,kim2014optimized}) we extend the framework of performance estimation to the composite case involving a much broader class of algorithms and function classes (see \secref{sec:priorwork} for more details about previous works).

Our performance estimation framework relies on formulating the \revv{worst-case computation} problem as \revv{a tractable semidefinite program (SDP)}, which can be tackled with standard solvers~\cite{Article:Yalmip,Article:Mosek,Article:Sedumi}. It enjoys the following attractive features:
\begin{itemize}
\item Any primal feasible solution to this SDP leads to a lower bound on the worst-case performance of the method under consideration, by exhibiting a particular instance of~\eqref{eq:origOpt}.
\item Any dual feasible solution to this SDP corresponds to an upper bound on the worst-case performance of the method under consideration, which can be converted into an explicit proof based on a combination of valid inequalities.
\end{itemize}

\subsection{Notation}\label{subsec:notations}In this paper, we work in a finite-dimensional real vector space $\E$ and the corresponding dual space $\Es$ consisting of all linear functions on~$\E$, and denote their dimension by $d=\dim \E=\dim \Es$. We consider a dual pairing\footnote{The dual pairing is a real bilinear map $\inner{.}{.}:\Es\times\E\rightarrow \R$ satisfying (i)~$\forall x\in \E\backslash \left\{0\right\}$, $\exists~s\in~\Es~\text{ such that } \inner{s}{x}\neq 0,$ and (ii)~$\forall s\in \Es\backslash \left\{0\right\}, \exists x\in\E \text{ such that } \inner{s}{x}\neq 0$.} between those spaces, denoted by $\inner{.}{.}:\Es\times\E\rightarrow \R$. We also consider a self-adjoint positive definite\footnote{That is, a linear operator $B$ satisfying (i) $\innerS{Bx}{y}=\innerS{By}{x} \ \forall x,y\in\E$ (self-adjoint), and (ii)~$\inner{Bx}{x}>0 \ \forall x\in\E \backslash \left\{0\right\}$ (positive definite). \revd{A direct consequence of $B$ satisfying those assumptions is the existence of the linear operator $B^{-1}$.}} linear operator $B:\E\rightarrow \Es$ for $\inner{.}{.}$, which allows defining the following primal and dual norms:
\begin{align*}
\normpsq{x}=\inner{Bx}{x} \ \forall x\in\E, \quad \normdsq{s}=\inner{s}{B^{-1}s} \ \forall s\in\Es.
\end{align*}
We denote $\innerE{x}{y}=\inner{Bx}{y}$ for $x,y\in\E$ and $\innerEs{x}{y}=\inner{x}{B^{-1}y}$ for $x,y\in\Es$. The usual case is simply $\E=\Es=\R^d$ with $\inner{x}{y}=x^\top\! y$ the standard Euclidean inner product and $B$ the identity operator, for which we also have $\normpsq{x}=\normdsq{x}=\inner{x}{x}$.

\revd{In addition, we use the notation $\mathcal{F}_{0,\infty}$ for the set of closed, proper, and convex functions.}
For a convex function $f: \fdef$, we denote by $f^*:\fsdef$ its Legendre--Fenchel conjugate
\[ f^*(y)=\sup_{x\in  \E} \inner{y}{x}-f(x),\] by $\partial f(x)$ the subdifferential of $f$ at $x$ (set of all subgradients of $f$ at $x$), and by $\tilde{\nabla} f(x)$ a particular subgradient of $f$ at $x$. Similarly, the gradient of a differentiable function $f$ at $x$ is denoted by $\nabla f(x)$. 

For notational convenience we denote by $K=\left\{1,\hdots,n\right\}$ the set of indices corresponding to the different components $F^{(k)}$ in the objective function of~\eqref{eq:origOpt}. We also denote by $\mathcal{F}_K(\E)$ the set of functions of the form~\eqref{eq:origOpt} with components $F^{(k)}\in \mathcal{F}_k(\E)$ $\forall k\in K$---that is, $F\in\mathcal{F}_K(\E)$.

Finally, we use the standard notation $e_i$ for the unit vector having a single $1$ as its $i$th component.
\subsection{Performance estimation problems}
In~\cite{taylor2015smooth}, we introduced a formal definition for the performance estimation problem in the case of a black-box first-order method for unconstrained minimization of a single convex function $F$. We now generalize the performance estimation framework for handling multiple components in the objective function.

First, we formalize black-box methods using the concept of \emph{black-box oracles}. That means that methods are only allowed to access the different components of the objective function by calling some routines, or oracles, returning some information about them at a given point. In particular, we focus on the standard first-order oracle for $F^{(k)}$: $\mathcal{O}_{F^{(k)}}(x)=\left(F^{(k)}(x),\tilde{\nabla} F^{(k)}(x)\right)$, where $\tilde{\nabla} F^{(k)}(x)\in\partial F^{(k)}(x)$ is a subgradient of $F^{(k)}$ at $x$. The general formalism of the approach is nevertheless also valid for other standard oracles, as, for example, zeroth-order  or second-order ones---that is, $\mathcal{O}_{F^{(k)}}(x)=\left(F^{(k)}(x)\right)$ or $\mathcal{O}_{F^{(k)}}(x)=\left(F^{(k)}(x),\nabla F^{(k)}(x),\nabla^2 F^{(k)}(x)\right)$. However, as we will see, our ability to solve the corresponding performance estimation problems in an exact way is currently limited to first-order oracles.

Second, we consider a sequence of $N+1$ iterates $\rev{\left\{x_i\right\}_{0 \le i\le N}} \subset \E$, corresponding to a method that performs $N$ steps from an initial iterate $x_0$. For each of those iterates we consider the set oracle calls for each functional component\footnote{That is, we chose to associate a call to each oracle to every iterate. This is mostly for notational convenience and does not induce any loss of generality, as a method can always avoid using the information returned by one of the oracles at some iterations.} $\mathcal{O}_{F^{(k)}}$: \rev{ $\left\{\mathcal{O}_{F^{(k)}}(x_i)\right\}_{0 \le i\le N}$}. 

Third, we consider a method $\mathcal{M}$ whose iterates can be computed by combining past and current oracle information about $F$. This means that after the method has performed \modAT{$i-1$} steps, the  next iterate \modAT{$x_{i}$ should be computable as a} solution to an equation of the form
\begin{equation} 
\textsc{Equation}(x_0,\left\{\mathcal{O}_{F^{(k)}}(x_{0})\right\}_{k\in K},x_1,\left\{\mathcal{O}_{F^{(k)}}(x_{1})\right\}_{k\in K}, \hdots,\modAT{x_{i}},\left\{\mathcal{O}_{F^{(k)}}\modAT{(x_{i})}\right\}_{k\in K}).\label{EqImplicite} \tag{EQ\textsubscript{i}} 
\end{equation}Note that the only unknown in this equation is \modAT{$x_{i}$}, and that it thus provides an implicit definition for the next step. We will see later that this assumption on $\mathcal{M}$ includes a large number of existing methods for composite optimization.

Finally, we consider a real-valued performance criterion $\mathcal{P}$ \revd{ for evaluating the efficiency of the method}. \revd{In what follows sequel, we assume without loss of generality that the lower the value of $\mathcal{P}$, the better the corresponding method.} 

In our framework, this performance criterion is \revd{generally} allowed to depend on information returned by the oracles $\mathcal{O}_{F^{(k)}}$ at all the iterates \rev{$\left\{x_i\right\}_{0 \le i\le N}$}, but also at an extra point $x_* \in \E$ assumed to be an optimal solution to problem~\eqref{eq:origOpt}. \revd{Also, we} allow $\mathcal{P}$ to depend on \revd{the} iterates themselves. Examples of such performance criteria include objective function accuracy $F(x_N)-F(x_*)$,  and distance to an optimal solution $\normpsq{x_N - x_*}$. For notational convenience we introduce an index set for all iterates (including optimal solution) $I = \{ 0, 1, \ldots, N, * \}$.

The worst-case performance of method $\mathcal{M}$ on~\eqref{eq:origOpt} is then the optimal value of the following optimization problem, with both functions $\left\{ F^{(k)} \right\}_{k \in K}$ and iterates $\left\{x_i\right\}_{i\in I}$ as variables, which we call a performance estimation problem (PEP):
\begin{align}
&\sup_{\left\{ F^{(k)} \right\}_{k \in K},\left\{x_i\right\}_{i\in I}} \ \mathcal{P}(\rev{\left\{\mathcal{O}_{F^{(k)}}(x_i)\right\}_{i\in I,k\in K}},\left\{x_i\right\}_{i\in I})  \tag{PEP}\label{Intro:PEP} \\
\text{\revd{subject to} }& F^{(k)}\in \mathcal{F}_k(\E) \ \forall k\in K,  \notag\\
& x_0 \text{ satisfies some initialization condition,} \notag \\
& x_{i} \text{ is computed by $\mathcal{M}$ according to \eqref{EqImplicite}  $\forall 1 \le i \le N$,} 
\notag\\
& x_* \text{ is a minimizer of } F(x). \notag
\end{align}That is, a solution to~\eqref{Intro:PEP} corresponds to an instance of problem~\eqref{eq:origOpt} on which method $\mathcal{M}$ behaves as badly as possible with respect to the performance criterion~$\mathcal{P}$. The initialization condition on $x_0$ is required as most methods exhibit unbounded worst-case performance without it. In what follows we will mostly restrict ourselves to the classical approach, which consists in bounding the initial distance to an optimal solution with a constant $R$, i.e., assume $\normp{x_0-x_*} \leq R$.

Note that~\eqref{Intro:PEP} is inherently an infinite-dimensional optimization problem, as functions $F^{(k)}$ appear as variables. However, a crucial observation is that, due to the black-box assumption on the objective components, this problem can be cast completely equivalently in a finite-dimensional fashion. Indeed, introducing the \emph{outputs} of the oracle calls as variables, namely, $O^{(k)}_i = \mathcal{O}_{F^{(k)}}(x_i)$ for all iterates $i \in I$ and oracles $k \in K$, we observe that steps of method $\mathcal{M}$ can be still be computed using only information contained in variables $O^{(k)}_i $, so that we can reformulate \eqref{Intro:PEP} as 
\begin{align*}
&\sup_{\left\{O^{(k)}_i \right\}_{i\in I, k \in K},\left\{x_i\right\}_{i \in I}} \ \mathcal{P}\left(\left\{O^{(k)}_i \right\}_{i\in I, k \in K}, \left\{x_i\right\}_{i\in I}\right), \tag{PEP2}\label{Intro:PEP2} \\
\text{\revd{subject to} } 
&\text{$\exists F^{(k)} \in \mathcal{F}_k(\E)$ satisfying } \mathcal{O}_{F^{(k)}}(x_i) = O^{(k)}_i 
\ \forall i \in I, k\in K, \\
& x_0 \text{ satisfies some initialization condition,} \notag \\
& x_{i} \text{ is computed by $\mathcal{M}$ according to \eqref{EqImplicite} $\forall 1 \le i \le N$,} 
\notag\\
& x_* \text{ is a minimizer of } F(x). \notag
\end{align*}
Note the central role played by the interpolation conditions $\mathcal{O}_{F^{(k)}}(x_i) = O^{(k)}_i$  $\forall i\in I$ and $k\in K$, which enforce the existence of functions $F^{(k)}$ compatible with the output of the oracles. In the next subsection we describe situations for which this formulation is tractable.

\subsection{First-order methods and first-order convex interpolation} In the remainder of this work, we restrict ourselves to first-order oracles and methods. We now investigate the concept of (first-order) convex interpolability, in order to make existence constraints from~\eqref{Intro:PEP2} tractable---more precise requirements are detailed in \secref{sec:prox-PEP}. From the assumptions, the existence constraint for function $F^{(k)}$
\[ \exists F^{(k)} \in \mathcal{F}_k(\E) \text{ satisfying } \mathcal{O}_{F^{(k)}}(x_i) = O^{(k)}_i \, \forall i \in I,\]
found in~\eqref{Intro:PEP2}, may be expressed in terms of first-order information only. Considering oracles returning first-order information  $\mathcal{O}_{F^{(k)}}(x)=(F^{(k)}(x), \tilde{\nabla} F^{(k)}(x))$, we denote their output at point $x_i$  by $\mathcal{O}_{F^{(k)}}(x_i)=O^{(k)}_i = (f_i^{(k)},g_i^{(k)})$. The above existence constraint can be rephrased into the set of interpolation conditions 
\begin{equation} \tag{INT}
\exists F^{(k)} \in \mathcal{F}_k(\E) \text{ satisfying } F^{(k)}(x_i) = f^{(k)}_i \text{ and } g^{(k)}_i\in \partial F^{(k)}(x_i), \label{def:cvx_interp_cond} 
\end{equation} 
which leads us to introduce the following general definition.
\begin{definition}[\revd{$\mathcal{F}(\E)$-interpolation}] Let $I$ be an index set and $\mathcal{F}(\E)$ a class of convex functions, and consider the set of triples $S = \left\{(x_i,g_i,f_i)\right\}_{i\in I}$ where  $x_i\in\E$, $g_i\in\Es$ and $f_i\in\mathbb{R}$ $\forall i \in I$. The set $S$ is $\mathcal{F}(\E)$-interpolable if and only if there exists a function $F\in\mathcal{F}(\E)$ such that both $g_i\in\partial F(x_i)$ and $F(x_i)=f_i$ hold $\forall i\in I$. 
\label{def:CvxComp}
\end{definition}

The notion of $\mathcal{F}(\E)$-interpolation can be considered for any class of convex functions. 
 It allows us to formulate our PEP in its final form,
\begin{align*}
&\sup_{\left\{(f_i^{(k)},g_i^{(k)})\right\}_{i\in I, k \in K},\left\{x_i\right\}_{i \in I}} \ \mathcal{P}\left(\left\{(f_i^{(k)},g_i^{(k)})\right\}_{i\in I, k \in K}, \left\{x_i\right\}_{i\in I}\right), \tag{f-PEP}\label{Intro:dPEP}\\
\text{\revd{subject to} }
&\left\{(x_i,g^{(k)}_i,f^{(k)}_i)\right\}_{i\in I} \text{ is } \mathcal{F}_k\text{-interpolable } \forall k \in K, \\
& x_0 \text{ satisfies some initialization condition,} \notag \\
& x_{i} \text{ is computed by $\mathcal{M}$ according to \eqref{EqImplicite}  $\forall 1 \le i \le N$,} \notag\\
& x_* \text{ is a minimizer of } F(x). \notag
\end{align*}
We conclude that identifying explicit conditions for convex interpolability by a given class of functions will be the key to eliminate the infinite-dimensional functional variables from~\eqref{Intro:PEP} and transform it into a tractable estimation problem.

First-order convex interpolation was originally developed in~\cite{taylor2015smooth} for classes of (possibly) $L$-smooth and (possibly) $\mu$-strongly convex functions. In \secref{sec:cvx_interp}, we extend these results to classes of functions involving simultaneously strong convexity, smoothness, gradient boundedness, and domain boundedness (for different norms). Those extensions also allow us to consider interpolation by indicator or support functions, which may among others be used for problems involving constraints.

Also, note that the notion of first-order interpolability can be adapted for nonconvex functions as well. Replacing the concept of subdifferentiability by standard differentiability can be used to study the convergence of first-order algorithms in the cases where some components $F^{(k)}$ are not convex (see \secref{sec:smoothnconvex}). 

\subsection{Prior work}\label{sec:priorwork}
The concept of performance estimation showed itself very promising in the pioneer work of Drori and Teboulle~\cite{Article:Drori} and later in the work of Kim and Fessler~\cite{kim2014optimized}. In their work~\cite{Article:Drori}, Drori and Teboulle proposed a convex relaxation to obtain numerical upper bounds on the worst-case behavior of fixed-step first-order algorithms minimizing a {single smooth convex function} over ${\R^d}$, which turned out to be tight in surprisingly many situations.\footnote{\modAT{An extension to provide upper bounds for the fixed-step projected gradient method is also provided in Drori's {Ph.D.} thesis~\cite{drori2014contributions}.}} They also proposed a way to numerically optimize the step size parameters of a fixed-step algorithm \modAT{by} minimizing an upper bound on its worst-case. Their approach is based on {semidefinite relaxations} of~\eqref{Intro:PEP} and was taken further by Kim and Fessler~\cite{kim2014optimized}, who derived analytically the optimized gradient method (OGM) previously identified numerically by Drori and Teboulle.

The performance estimation approach on the same smooth unconstrained minimization is further studied in~\cite{taylor2015smooth}, where {convex interpolation} allows the derivation of an exact convex reformulation of the problem, leading to tight worst-case estimates. The obtained semidefinite formulation also forms the basis for this work. 

Another recent and closely related approach for studying performances of first-order methods consists in viewing optimization algorithms as dynamical systems and to use the related stability theory in order to numerically analyze them. This idea is proposed by Lessard, Recht, and Packard in~\cite{lessard2014analysis} and is attractive because it requires solving a single SDP to obtain a bound that is valid for all subsequent iterations. This technique is particularly efficient for problems involving strong convexity, for which tight linear convergence rates are often recovered. However, as they aim at finding global rates of convergence, they are naturally more conservative than the general performance estimation approach.

For more details on the general topic of convergence analysis of first-order methods, we refer to the seminal books of Nemirovsky and Yudin~\cite{Book:NemirovskyYudin}, Polyak~\cite{Book:polyak1987}, Nesterov~\cite{Book:Nesterov}, and the more recent book of Bertsekas~\cite{bertsekas2015convex}. Concerning the development of accelerated methods, we specifically refer to the original work of Nesterov~\cite{Nesterov:1983wy,Book:Nesterov}, and to the later extensions to minimize smoothed convex functions~\cite{nesterov2005smooth} and composite functions~\cite{beck2009fast,nesterov2007gradient}.

\subsection{Paper organization and main contributions}
This work is divided into three main parts. First, \secref{sec:prox-PEP} is concerned with putting in place the performance estimation framework for large classes of first-order algorithms, objective functions, performance criteria, and initialization conditions. The main idea of this section is to require every element of the~\ref{Intro:PEP} to be \emph{linearly Gram-representable} \rev{(defined in Section~\ref{sec:tract_pep})}. This section contains multiple examples of standard settings for which the methodology applies---including those covering (sub)gradient methods (along with their projected and proximal counterparts) and conditional gradient methods (CGMs).

Section~\ref{sec:cvx_interp} focuses on providing convex interpolation conditions for different classes of convex functions commonly arising in practice. Those classes include convex functions, possibly with strong convexity, smoothness, bounded domain, and bounded (sub)gradient requirements.
The subclasses of indicator and support functions are also explicitly handled. Those classes of functions can all be used directly in the performance estimation framework of \secref{sec:prox-PEP}, since their corresponding interpolation conditions are {linearly Gram-representable}. This section ends with an extension of the convex interpolation results to cope with smooth nonconvex functions in a linearly Gram-representable way.

In \secref{sec:numerics}, we apply our approach to several concrete first-order algorithms. We obtain improvements on the analysis of several well-known methods, either analytically or numerically, including the proximal point algorithm and the CGM.
\del{We also use those results to provide an extension of the OGM proposed by Kim and Fessler~\cite{kim2014optimized} that incorporates a projection or a proximal operator to tackle constrained and composite problems.}

\section{Performance estimation framework for first-order algorithms}
\label{sec:prox-PEP} We start this section by formulating~\eqref{Intro:dPEP} in terms of a Gram matrix. This leads to a tractable convex formulation for~\eqref{Intro:dPEP}---once appropriate assumptions are made on the classes of objective function components, methods, performance criteria, and initialization conditions.  Those assumptions are motivated by practical applications, which we also provide in the following. The main point underlying those assumptions is to ensure that every element of the PEP can be formulated in a linear way in terms of both the entries of a Gram matrix and the function values at the iterates.
\subsection{Gram representations} \label{sec:GramRep} Let us consider $N+1$ iterates $x_0,\hdots,x_N$ and an optimal solution $x_*$, and the set of corresponding oracle outputs  $\left\{(f^{(k)}_i,g^{(k)}_i)\right\}_{i\in I,k\in K}$. The accumulated information after those $N+1$ oracle calls can be gathered into a $d \times (n+1)(N+2)$ matrix\footnote{We recall that $B:\E\rightarrow\Es$ is a positive definite operator which is chosen as the identity operator in standard situations (see Section~\ref{subsec:notations}).} $P_N$ (using a slight abuse of notation) and a vector $F_N$ of length $n(N+2)$:
\begin{align}
P_N&=[Bx_0 \ \hdots \ Bx_N \ \lvert \ Bx_* \ \ \lvert \ g_0^{(1)}  \ \hdots \ g_0^{(n)}\ \lvert \ \hdots \ \lvert \ g_N^{(1)}  \ \hdots \ g_N^{(n)} \ \lvert \ g_*^{(1)}  \ \hdots \ g_*^{(n)}],\label{eq:acc_info}\\
F_N&=[ \ f^{(1)}_0 \ \hdots \ f^{(n)}_0 \ \lvert \ \hdots \ \lvert \ f^{(1)}_N \  \hdots f^{(n)}_N \ \lvert \ f^{(1)}_*  \ \hdots f^{(n)}_*].\label{eq:acc_info_func}
\end{align}
We also denote by $B^{-1}P_N$ the matrix 
\[B^{-1}P_N=[x_0 \ \hdots \ x_N \ \lvert \ x_* \ \ \lvert \ B^{-1}g_0^{(1)}  \  \hdots \ B^{-1}g_*^{(n)}].\]
In order to formulate~\eqref{Intro:PEP} in a tractable way for first-order methods, we use a Gram matrix. That is, we define a symmetric $(n+1)(N+2) \times (n+1)(N+2)$ Gram matrix $G_N\in \mathbb{S}^{(n+1)(N+2)}$, using the following construction:
{\small \[ G_N = \begin{pmatrix}
\innerE{x_0}{x_0} & \hdots & \innerE{x_0}{x_N} & \innerE{x_0}{x_*} & \innerS{\rev{g_0^{(1)}}}{x_0} & \hdots & \innerS{g_*^{(n)}}{x_0}\\
\vdots & \ddots & \vdots & \vdots & \vdots & \ddots & \vdots \\
\innerE{x_N}{x_0} & \hdots & \innerE{x_N}{x_N} & \innerE{x_N}{x_*} & \innerS{\rev{g_0^{(1)}}}{x_N} & \hdots & \innerS{g_*^{(n)}}{x_N}\\
\innerE{x_*}{x_0} & \hdots & \innerE{x_*}{x_N} & \innerE{x_*}{x_*} & \innerS{\rev{g_0^{(1)}}}{x_*} & \hdots & \innerS{g_*^{(n)}}{x_*}\\
\innerS{\rev{g_0^{(1)}}}{x_0} & \hdots & \innerS{\rev{g_0^{(1)}}}{x_N} & \innerS{\rev{g_0^{(1)}}}{x_*} & \innerEs{\rev{g_0^{(1)}}}{\rev{g_0^{(1)}}} & \hdots & \innerEs{\rev{g_0^{(1)}}}{g_*^{(n)}}\\
\vdots & \ddots & \vdots & \vdots & \vdots & \ddots & \vdots \\
\innerS{g_*^{(n)}}{x_0} & \hdots & \innerS{g_*^{(n)}}{x_N} & \innerS{g_*^{(n)}}{x_*} & \innerEs{g_*^{(n)}}{\rev{g_0^{(1)}}} & \hdots & \innerEs{g_*^{(n)}}{g_*^{(n)}}
\end{pmatrix}\succeq 0.  \]}This can be written more compactly as $\left[G_N\right]_{ij}=\innerS{P_N e_i}{B^{-1}P_N e_j}=\innerEs{P_N e_i}{P_N e_j}$, where $P_N e_k$ corresponds to the $k$th column of $P_N$. Also, note that the size of this matrix does not depend on the dimension $d$ of the spaces we are working with.

\begin{remark} 
Note that Gram matrix  $G_N$ is positive semidefinite for any matrix $P_N$ (of the form~\eqref{eq:acc_info}). The number of linearly independent columns of $P_N$ is equal to the rank of $G_N$. Hence this rank is upper bounded by the dimension $d$ of the ambient space of the iterates. It is possible to recover a matrix $P_N$ of the form\footnote{In the case $\E=\Es=\R^d$ with the usual inner product $\inner{x}{y}=x^\top y$ and $B$ the identity operator, this can be done using the standard Cholesky factorization. In the general cases the exact same idea can be used, using the chosen inner product $\innerEs{.}{.}$ in the process.}~\eqref{eq:acc_info} from any Gram matrix $G_N\succeq 0$ satisfying $\mathrm{Rank}\ G_N\leq d$.
\label{remark:gram}
\end{remark}

Our goal for the next subsections is to show that in a lot of situations, the performance estimation problem~\eqref{Intro:dPEP} can be expressed exactly as an SDP in the $F_N$ and $G_N$ variables:
\modAT{\begin{align}\label{PEPS:SDP}
\sup_{
F_N\in\mathbb{R}^{n(N+2)}, G_N\in\mathbb{S}^{(n+1)(N+2)}
}  &c^\top\! F_N + \trace{C G_N} \tag{SDP-PEP} \\ \text{ subject to  } \quad
& a_i + b_i^\top F_N + \trace{D_i G_N} \leq 0 \quad \forall i \in S, \\
& G_N\succeq 0,
\end{align}}with $S$ some index set related to the constraints, and elements $a_i,b_i,c,D_i$, and $C$ of appropriate dimensions for writing the constraints and objective function linearly in terms of the Gram matrix $G_N$ and of the objective function values $F_N$.

\subsection{Tractable formulation of the performance estimation problem}\label{sec:tract_pep} \modAT{In this section, we present our main result, stating that computing the exact worst-case performance of a method on a class of functions is tractable and can, in many cases, be formulated as~\eqref{PEPS:SDP}. We start with the concept of Gram-representability for the different ingredients of the PEP.}
\begin{definition}A class of functions is Gram-representable (resp., linearly \\Gram-representable) if and only if its interpolation conditions~\eqref{def:cvx_interp_cond} can be formulated using a finite number of convex (resp., linear) constraints involving only the matrix $G_N$ and the function values $F_N$.
\label{def:GR_func}
\end{definition}

\modAT{
The functional classes of smooth strongly convex functions, smooth convex functions with bounded (sub)gradients, and strongly convex functions with bounded domain are linearly Gram-representable. In addition, the particular subclasses of support and indicator convex functions share this same advantageous property. The details and proofs of these results are postponed to \secref{sec:cvx_interp}.}

\begin{definition} A performance measure is Gram-representable (resp., linearly \\Gram-representable) if and only if it can be expressed as a concave (resp., linear) function involving only the matrix $G_N$ and the function values $F_N$.
\end{definition}

\modAT{The class of linearly Gram-representable performance criteria contains a large variety of choices, including most standard measures we are aware of. For example, it is easy to check that standard \rev{optimality criteria} in function values $F(x_N)-F(x_*)$, in residual subgradient norm $\normdsqsm{\tilde{\nabla}F(x_N)}$, distance to optimality $\normpsq{x_N-x_*}$, and distance to feasibility $\normpsq{x_N-\Pi_Q(x_N)}$, can be handled.

On the other hand, multiple examples of nonlinear Gram-representable performance criteria can also be handled with no difficulty. This includes performance measures involving the best values among all iterates, for example, $\min_{0 \le i \le N} F(x_i) - F(x_*)$, or the best residual gradient norm among the iterates $\min_{0 \le i \le N} \normdsq{\nabla F(x_i)}$ (see also \cite[Sect.\@ 4.3]{taylor2015smooth}).
}
\begin{definition} An initialization condition is Gram-representable (resp., linearly Gram-representable) if and only if it can be expressed using a finite number of convex (resp., linear) constraints involving only the matrix $G_N$ and the function values $F_N$.
\end{definition}

\modAT{Standard examples of valid initial conditions include the classical bounds on the initial distance to optimality $\normpsq{x_0-x_*}\leq R^2$, on the initial function value $F(x_0)-F_*\leq R$, and on initial gradient value $\normdsq{\nabla F(x_0)}\leq R^2$, \revd{for given values of $R\geq 0$}.}

\begin{definition} A first-order method is Gram-representable (resp., linearly \\Gram-representable) if and only if the computation of its iterates, implicitly defined by an equation of type \eqref{EqImplicite}, can be expressed using a finite number of convex (resp., linear) constraints involving only the matrix $G_N$ and the function values $F_N$.
\end{definition}

\modAT{We refer to the next section for examples of linearly Gram-representable methods.} \revd{Note that the necessary and sufficient condition for $x_*$ to be optimal for $F$ is always linearly Gram-representable. Indeed, it corresponds to requiring $\tilde\nabla F(x_*)=0$, i.e.,
\[ \sum_{k\in K} \tilde\nabla F^{(k)}(x_*) = \sum_{k\in K} g_*^{(k)} =0 \Leftrightarrow \normdsq{\sum_{k\in K} g_*^{(k)}}=\inneraEs{\sum_{k\in K} g_*^{(k)}}{\sum_{k\in K} g_*^{(k)}}=0, \]
where the last condition is linear in the entries of $G_N$.}

We can now state our main results concerning Gram-representable situations. 
\begin{proposition} Consider a class of composite objective functions 
 $\mathcal{F}_K(\E)$ with $n$ components, a first-order method $\mathcal{M}$, a performance measure $\mathcal{P}$, and an initial condition $\mathcal{I}$ which are all Gram-representable.

Computing the worst-case for criterion $\mathcal{P}$ of method $\mathcal{M}$ after $N$ iterations on objective functions in class $\mathcal{F}_K(\E)$ with initial condition $\mathcal{I}$ can be formulated as a convex \rev{program when} dimension of the space $\E$ satisfies $d\geq (n+1)(N+2)$. Otherwise, it can be formulated as a convex program plus an additional nonconvex rank constraint $\mathrm{Rank} \ G_N\leq d$. 

If in addition $\mathcal{F}_K(\E)$, $\mathcal{M}$, $\mathcal{P}$, and $\mathcal{I}$ are linearly Gram-representable, then the corresponding \revd{convex} problem is an SDP of the form~\eqref{PEPS:SDP}, with $F_N\in\mathbb{R}^{n(N+2)}$ and $G_N\in\mathbb{S}^{(n+1)(N+2)}$ as variables.
\label{thm:sdp_pep}
\end{proposition}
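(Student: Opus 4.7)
The plan is to reformulate the infinite-dimensional problem \eqref{Intro:dPEP} as a finite-dimensional convex problem whose variables are exactly the Gram matrix $G_N$ and the vector $F_N$ defined in \eqref{eq:acc_info}--\eqref{eq:acc_info_func}, then appeal to Remark~\ref{remark:gram} to show the reformulation is exact. First I would rewrite each element of \eqref{Intro:dPEP} using only entries of $(G_N,F_N)$: by Gram-representability of $\mathcal{F}_K(\E)$, each of the $n$ interpolability requirements becomes a finite number of convex constraints in $(G_N,F_N)$; by Gram-representability of $\mathcal{M}$, the iterate-defining equations \eqref{EqImplicite} likewise translate into convex constraints; by Gram-representability of $\mathcal{I}$ and $\mathcal{P}$, the initial condition becomes a convex constraint and the performance measure becomes a concave objective; finally the optimality condition for $x_*$ is already linearly Gram-representable, as noted just before the statement. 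This yields a problem in the variables $G_N \in \mathbb{S}^{(n+1)(N+2)}$ and $F_N \in \mathbb{R}^{n(N+2)}$ with convex feasible set and concave objective.

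The second step is to guarantee that each feasible $(G_N,F_N)$ produced by the above reformulation corresponds to an actual realization of iterates, subgradients and function values in $\E$ and $\Es$. This is exactly the content of Remark~\ref{remark:gram}: a symmetric matrix $G_N$ is the Gram matrix of some $P_N$ of the form \eqref{eq:acc_info} if and only if $G_N \succeq 0$ and $\mathrm{Rank}\,G_N \leq d$. Adding $G_N \succeq 0$ preserves convexity, and the rank condition is automatically implied by $G_N \in \mathbb{S}^{(n+1)(N+2)}$ whenever $d \geq (n+1)(N+2)$, which yields the announced convex program. When $d < (n+1)(N+2)$, I would simply append the non-convex constraint $\mathrm{Rank}\,G_N \leq d$, giving the second claim.

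For the final statement, under the stronger linear Gram-representability hypotheses, the constraints coming from the function class, the method, the initialization and the optimality condition become affine in $(G_N,F_N)$, and the performance measure becomes linear. Combining these with $G_N \succeq 0$ produces exactly the form \eqref{PEPS:SDP}, where $c$, $C$ and the constraint data $(a_i,b_i,D_i)$ are obtained by reading off the coefficients from the explicit linear Gram expressions.

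The main obstacle is not any deep argument but the careful bookkeeping establishing \emph{equivalence} (not just a relaxation) between \eqref{Intro:dPEP} and the Gram reformulation. The forward direction is immediate: given a feasible instance of \eqref{Intro:dPEP}, building $(G_N,F_N)$ from the iterates, subgradients and function values yields a feasible point of the reformulation with the same objective value. The reverse direction is the one that does real work: given a feasible $(G_N,F_N)$, one must reconstruct an instance of \eqref{eq:origOpt} together with a valid run of $\mathcal{M}$ on it; this relies entirely on Remark~\ref{remark:gram} to recover $P_N$, after which the Gram-representability of each ingredient guarantees the reconstructed objects satisfy all original constraints.
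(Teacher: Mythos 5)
Your proposal is correct and follows essentially the same route as the paper's (much terser) proof: both rely on the Gram-representability definitions to rewrite every ingredient of \eqref{Intro:dPEP} in terms of $(G_N,F_N)$ and on Remark~\ref{remark:gram} to recover a realization $P_N$ from any feasible $G_N\succeq 0$ with $\mathrm{Rank}\,G_N\leq d$, the rank condition being automatic when $d\geq (n+1)(N+2)$. Your write-up simply makes explicit the two-directional equivalence that the paper summarizes as ``any solution to the corresponding optimization problem can be transformed into a particular instance of~\eqref{eq:origOpt}, and vice versa.''
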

\begin{proof}
It directly follows from Remark~\ref{remark:gram} and from the definitions of (linear) Gram-representability for the class of functions, first-order methods, performance measures, optimality condition of a solution, and initialization conditions: any solution to the corresponding optimization problem can be transformed into a particular instance of~\eqref{eq:origOpt}, and vice versa.
\end{proof}
\begin{remark}The optimal value of~\eqref{Intro:PEP} increases with dimension $d$. \\When \eqref{Intro:PEP} with Gram-representable elements attains a finite optimal value, Proposition~\ref{thm:sdp_pep} implies the existence of a function with dimension \revv{at most} $(n+1)(N+2)$ that achieves \rev{the} worst-case value.\end{remark}

\begin{remark} \label{rem:largescale}The assumption $d\geq (n+1)(N+2)$ is referred to as the \emph{large-scale assumption} in what follows. In terms of PEP, this assumption allows us to discard the nonconvex rank constraint \revv{and lead to a tractable semidefinite programming problem, which can be solved to global optimality efficiently (see, e.g.,~\cite{Vandenberghe94semidefiniteprogramming}). Without that assumption, our PEP is a nonconvex rank-constrained SDP, equivalent to a quadratic programming problem that is NP-hard in general (e.g., it has MAX-CUT~\cite{goemans1995improved} and other nonconvex quadratic programs~\cite{pardalos1991quadratic,sahni1974computationally} as particular cases). Approaches to handle rank constraints exist (e.g., via augmented Lagrangian techniques~\cite{burer2003nonlinear}, via manifold optimization~\cite{journee2010low}, or via Newton-like methods~\cite{orsi2006newton}) but in general only guarantee convergence to stationary points. This is not useful in the case of~\eqref{PEPS:SDP}, as this only provides lower bounds on the worst-case performance.}
\end{remark}

\begin{remark}

Under the large-scale assumption, we obtain dimension-free guarantees (i.e., valid for any dimension, and tight as soon as $d \ge (n+1)(N+2)$), as is commonly found in the literature about first-order methods. In addition, we note that the dimension bound $(n+1)(N+2)$ is in fact (very) conservative for most standard algorithms---that is, the \revv{bound in the large-scale assumption can typically be significantly reduced; see Corollary~\ref{cor:Large_scale_FSLFOM}.
} 
\end{remark}

\begin{remark} The worst-case results provided by the SDP from \rev{Proposition}~\ref{thm:sdp_pep} provide a tight worst-case \modAT{achievable} for any operator $B$ and any dual pairing $\inner{.}{.}$. 
\label{rem:genB_wc}
\end{remark}

\subsection{Linearly Gram-representable first-order methods} \label{sec:lingramrep} This class of first-order methods contains as particular cases the class of FSLFOM, whose iterations are defined by a linear equation (with known constant coefficients) involving the iterates and the corresponding (sub)gradients.

\newpage\begin{definition}
An FSLFOM is a method which computes iterate $x_{i}$ as the solution of \footnote{The iteration is written as an \rev{equality} on $\E$, but it is possible and totally equivalent to write it on $\E^*$ using the operator $B^{-1}$.} 
\begin{align}
t_{i,i} Bx_{i}+\sum_{k\in K} h^{(k)}_{i,i} g^{(k)}_{i}=\sum_{j=0}^{i-1} \left[ t_{i,j} \rev{Bx_j}+ \sum_{k\in K} h_{i,j}^{(k)}g^{(k)}_j \right] ,\label{eq:gen_alg_model}\tag{FSLFOM}
\end{align}
where all coefficients $h_{i,j}^{(k)},t_{i,j}\in\mathbb{R}$ ($0\leq j\leq i$ and $k\in K$) are fixed beforehand.
\end{definition}

Note the class of FSLFOM is exactly the class of methods whose iterations can be written in the form (using first-order optimality conditions and convexity of $F^{(k)}$):
\begin{align*}
x_{i}=\argmin{x\in\E} \Bigg\{\frac{t_{i,i}}{2}\normpsq{x}&+ \sum_{k\in K} h^{(k)}_{i,i}F^{(k)}(x)\\-&\inner{\sum_{j=0}^{i-1} \left[ t_{i,j}B x_j+ \sum_{k\in K} h_{i,j}^{(k)}\nabla F^{(k)}(x_j)\right]}{x} \Bigg\} ,
\end{align*}
which, in some sense, describes the most general method our framework can deal with. The computation of iterate $x_i$ can also be written as the following linear equation, which involves a linear combination of the columns of matrix $P_N$ (which contain the harvested first-order information about the problem so far) using a constant vector of coefficients  $m_i\in\mathbb{R}^{(n+1)(N+2)}$:
\begin{align*}
P_N {m}_i=0.
\end{align*}
Note that coefficients in $m_i$ corresponding to columns describing subsequent iterates ($B x_j$ and $g^{(k)}_j$  $\forall j > i$ and $k \in K$)  must naturally be equal to zero, as well as those of columns related to the optimal solution ($B x_*$ and $g^{(k)}_*$  $\forall k \in K$). 
In addition, any FSLFOM is linearly Gram-representable using the following formulation:
\begin{equation}
0=P_N m_i \Leftrightarrow 0=\normdsq{P_N m_i}=\inner{P_N m_i}{B^{-1}P_N m_i}=m_i ^{\top\!}G_{N} m_i,\label{eq:trickFSLFOM}
\end{equation}
which is clearly linear in terms of the Gram matrix $G_N$. This can also easily be extended to cope with more general classes of linearly Gram-representable first-order methods, such as the following:
\begin{equation}
c_i^{(\text{low})\top} F_N +b_i^{(\text{low})} \leq m_i ^\top G_N m_i  \le c_i^{(\text{up})\top} F_N +b_i^{(\text{up})},
\label{eq:extd_FSLFOM}
\end{equation}
where $c_i^{(\text{low})},b_i^{(\text{low})}$ and $c_i^{(\text{up})},b_i^{(\text{up})}$ are some fixed parameters. Those could, for example, be used to require a sufficient decrease condition (involving function values in $F_N$) or to consider methods that perform an inexact computation of the next iterate in~\eqref{eq:gen_alg_model}, such as in 
\begin{equation}
\normd{P_N m_i} \le \epsilon_i \Leftrightarrow m_i^\top G_N m_i  \le \epsilon_i^2,
\label{eq:gen_alg_model_inex}\tag{Inexact FSLFOM}
\end{equation}
where $\epsilon_i\geq 0$ is some tolerance on the accuracy of the computation of $x_i$ in~\eqref{eq:gen_alg_model}. 

\medskip\paragraph{Examples of FSLFOM} Before going into the details of the PEPs for our class of linear fixed-step methods over different classes of convex functions, let us give several examples of methods fitting into the model provided by~\eqref{eq:gen_alg_model} and \eqref{eq:gen_alg_model_inex}. 

\begin{itemize}
\item \emph{Fixed-step subgradient and gradient algorithms}. Minimizing a convex function $F$ using a fixed-step subgradient method is naturally described as $x_{i}=x_{i-1}-\alpha_{i}B^{-1} g_{i-1}$, with $\alpha_{i}$ some step size and $g_{i-1}\in\partial F(x_{i-1})$. The method clearly belongs to the class of FSLFOM, and its linear Gram matrix representation can be obtained using formulation~\eqref{eq:trickFSLFOM}.
\item \emph{Proximal methods and proximal gradient methods}. Consider a composite objective $F^{(1)}+F^{(2)}$, where $F^{(2)}$ admits a computable proximal operator. Minimizing this objective with a fixed-step proximal gradient method is usually described as performing an explicit (sub)gradient step on $F^{(1)}$ followed by a (proximal) minimization step involving $F^{(2)}$:
\begin{align*}
x_{i}&=\prox{\alpha_i F^{(2)}}{x_{i-1}-\alpha_iB^{-1} \tilde{\nabla} F^{(1)}(x_{i-1})}\\&=\argmin{x\in\E}\left\{ \alpha_i F^{(2)}(x)+\frac{1}{2}\normpsq{x_{i-1}- \alpha_i B^{-1}\tilde{\nabla} F^{(1)}(x_{i-1})-x}\right\}.
\end{align*}
Optimality conditions on this last term allow writing each iteration as
\[B x_{i}+ \alpha_i \tilde{\nabla}F^{(2)}(x_{i})=B x_{i-1}- \alpha_i \tilde{\nabla} F^{(1)}(x_{i-1})\]
with some $\tilde{\nabla}F^{(2)}(x_{i})\in\partial F^{(2)}(x_{i})$, which is an implicit equation in $x_i$. This method is clearly a FSLFOM and therefore fits in our framework. Projected gradient methods are obtained using the same technique, but on the particular class of convex indicator functions $F^{(2)}$, whereas proximal point algorithms correspond to the case where $F^{(1)}=0$. 

\item \emph{Conditional gradient methods}. Consider an objective function $F^{(1)}$ to be minimized over a closed convex set $Q$, whose indicator function is  $F^{(2)}$. CGMs for this problem also fit the FSLFOM model. Indeed, their iterations take the following form (given a starting point $z_0$):
\begin{align*}
&y_i=\argmin{z\in\E}\left\{ \inner{z-z_i}{\tilde{\nabla} F^{(1)}(z_i)} +F^{(2)}(z)\right\},\\
&z_{i+1}=(1-\lambda_i)z_i + \lambda_i y_i,
\end{align*}
with coefficient $\lambda_i\in[0,1]$ chosen beforehand. Using first-order necessary and sufficient optimality conditions on the intermediate optimization problem, we obtain that $y_i$ can be defined by the following equation:
\begin{align*}
&\tilde{\nabla} F^{(1)}(z_i)=-\tilde{\nabla}F^{(2)}(y_{i}).
\end{align*}
This algorithm can also clearly be written as an FSLFOM; one only needs to merge the two sequences of iterates $y_i$ and $z_i$ into a single sequence, defining for example the iterates using  $x_{2i}=z_i$ and $x_{2i+1}=y_i$ for every $i=0,1,\hdots$

\item\emph{Inexact (sub)gradient methods}. Consider a convex function $F^{(1)}$ on which inexact steps are performed according to $x_{i+1}=x_i-\alpha_i B^{-1}(\tilde{\nabla} F^{(1)}(x_i)+\varepsilon_i)$, where errors $\varepsilon_i$ on the computation of the subgradients are bounded. More precisely, given some tolerance $\epsilon_i\geq 0$, we assume that $\normd{\varepsilon_i}\leq \epsilon_i$. This can be written in the inexact FSLFOM format:
\[ \normdsq{ \alpha_i^{-1}B \left(x_{i+1}-x_i\right)+\tilde{\nabla} F^{(1)}(x_i)}\le \epsilon_i^2. \]
Other noise models can also easily be used in the framework, such as the one proposed by d'Aspremont~\cite{d2008smooth}. However, the inexact $(\delta,L)$-oracles developed by Devolder, Glineur and Nesterov~\cite{devolder2014first} do not seem to easily fit into the approach.\footnote{This is due to the fact that no necessary and sufficient interpolation conditions for functions admitting such an inexact oracle are known---that is, standard conditions are only necessary to guarantee interpolability. Using necessary conditions that are not sufficient still allows obtaining upper bounds on the worst-case behavior, but those may not be tight.}
\end{itemize}
An even broader class of methods can be obtained by combining some of the above examples and/or restricting the functions to specific classes. For example, alternate projection-type algorithms are special cases of proximal methods applied to sums of convex indicator functions and hence can be represented in the FSLFOM format.

\subsection{\rev{Simplified} performance estimation problems}
{\rev{Note that for standard algorithms such as the above examples of FSLFOM, the SDP resulting from Proposition~\ref{thm:sdp_pep} can typically be further simplified, leading to a reduction in its size.}  

\rev{\begin{corollary} Consider a class of composite objective functions $\mathcal{F}_K(\E)$ with $n$ components, a performance measure $\mathcal{P}$ and an initialization condition $\mathcal{I}$ which are linearly Gram-representable, and an FSLFOM $\mathcal{M}$ whose iterations are linearly independent, meaning that the constant vectors $m_i$ used to define iterates $x_i$ in \eqref{eq:trickFSLFOM} are linearly independent.\footnote{This is a reasonable assumption, as every method using new information at each iteration will necessarily satisfy it. This does not imply that the points $x_i$ themselves are linearly independent.}

In addition, assume there are $p$ points $(g_i^{(k)},f_i^{(k)})$ such that neither $g_i^{(k)}$ nor $f_i^{(k)}$ is used in the performance measure $\mathcal{P}$, the initial condition $\mathcal{I}$, and the method $\mathcal{M}$. Then, the PEP can be written as a convex SDP using variables $F_N\in\mathbb{R}^{n(N+2)-p}$ and $G_N\in\mathbb{S}^{(n+1)(N+2)-N-p-1}$, with the possible additional rank constraint $\mathrm{rank}\ G_N \leq d$. 
\label{cor:Large_scale_FSLFOM}
\end{corollary}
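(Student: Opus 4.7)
The plan is to start from the SDP of Proposition~\ref{thm:sdp_pep}, whose variables are $F_N\in\mathbb{R}^{n(N+2)}$ and $G_N\in\mathbb{S}^{(n+1)(N+2)}$, and then to use the linear structure provided by the FSLFOM, the optimality condition on $x_*$, and the $p$ unused oracle outputs to reduce the dimensions of $F_N$ and $G_N$ by $p$ and $N+p+1$ respectively, while preserving the feasible set and the objective value.

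\emph{Step 1: collect the linear column constraints on $P_N$.} The $N$ iterations of the FSLFOM translate via~\eqref{eq:trickFSLFOM} into the $N$ identities $P_N m_i=0$ (equivalently $m_i^\top G_N m_i =0$ combined with $G_N\succeq 0$) for $i=1,\ldots,N$, where the vectors $m_i\in\mathbb{R}^{(n+1)(N+2)}$ are, by hypothesis, linearly independent. The necessary and sufficient optimality condition at $x_*$ gives one additional identity $P_N m_*=0$, where $m_*$ has its nonzero entries only in the columns corresponding to $g_*^{(1)},\ldots,g_*^{(n)}$; since every $m_i$ (for $i\le N$) has zero coefficients on those columns (an iteration of the FSLFOM never references $x_*$ or the $g_*^{(k)}$), the vector $m_*$ is linearly independent from $\{m_i\}_{i=1}^N$. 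This yields $N+1$ linearly independent vectors $m\in\mathbb{R}^{(n+1)(N+2)}$ annihilating $P_N$.

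\emph{Step 2: reduce the Gram matrix using the column constraints.} Let $M\in\mathbb{R}^{(n+1)(N+2)\times(N+1)}$ be the matrix whose columns are $m_1,\ldots,m_N,m_*$, and let $V\in\mathbb{R}^{(n+1)(N+2)\times((n+1)(N+2)-N-1)}$ be any basis of the orthogonal complement of the column space of $M$. Every column of $P_N$ lies in the image of $V$, so one can write $P_N=\widetilde{P}_N V^\top$ for some $\widetilde{P}_N\in\mathbb{R}^{d\times((n+1)(N+2)-N-1)}$; the corresponding reduced Gram matrix $\widetilde{G}_N=\widetilde{P}_N^\top B^{-1} \widetilde{P}_N\in\mathbb{S}^{(n+1)(N+2)-N-1}$ satisfies $G_N=V\widetilde{G}_N V^\top$, and conversely any $\widetilde{G}_N\succeq 0$ yields through this formula an admissible $G_N\succeq 0$ automatically satisfying $m_i^\top G_N m_i=0$ and $m_*^\top G_N m_*=0$. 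Since every Gram-representable ingredient ($\mathcal{F}_K$-interpolation, $\mathcal{P}$, $\mathcal{I}$, and the remaining FSLFOM equations) is linear in $G_N$, substituting $G_N=V\widetilde{G}_N V^\top$ preserves linearity in the new variable $\widetilde{G}_N$, so the resulting SDP has the same feasible set and optimal value, with a smaller Gram variable of size $(n+1)(N+2)-N-1$.

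\emph{Step 3: eliminate the $p$ unused pairs.} By assumption, there are $p$ oracle outputs $(g_i^{(k)},f_i^{(k)})$ whose entries $f_i^{(k)}$ do not appear in $F_N$ through $\mathcal{P}$, $\mathcal{I}$, nor $\mathcal{M}$, and whose entries $g_i^{(k)}$ do not appear in $G_N$ through these same quantities. The variables $f_i^{(k)}$ and the corresponding rows/columns of $\widetilde{G}_N$ (projected through $V$) can be freely chosen so as to satisfy the interpolation conditions of $\mathcal{F}_k$ at the remaining data: indeed, first-order $\mathcal{F}_k(\E)$-interpolation conditions involving an unused pair only impose constraints on that pair itself and always admit a solution given the free choice of $f_i^{(k)}$ and $g_i^{(k)}$ (one can always find a valid function value and subgradient at an extra point). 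Removing those $p$ interpolation constraints along with the corresponding $p$ entries of $F_N$ and $p$ rows/columns of the Gram matrix gives the announced sizes $F_N\in\mathbb{R}^{n(N+2)-p}$ and $G_N\in\mathbb{S}^{(n+1)(N+2)-N-p-1}$, and the rank constraint $\operatorname{rank} G_N \leq d$ is inherited from Proposition~\ref{thm:sdp_pep}.

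The main obstacle I expect is Step~3: one has to argue carefully that the $p$ discarded pairs can always be extended into valid $\mathcal{F}_k$-interpolable data given any choice of the remaining data, so that dropping the corresponding interpolation constraints does not enlarge the feasible set. For the convex classes considered in Section~\ref{sec:cvx_interp}, this follows from the constructive nature of the interpolation proofs (a function can always be extended with an arbitrary value and any subgradient at a fresh point), so the reduction is without loss of generality.
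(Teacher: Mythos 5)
Your proposal is correct and follows essentially the same route as the paper's proof: the size reduction comes from the same three sources (the $p$ unused oracle outputs, the $N$ linearly independent iteration identities $P_N m_i=0$, and the optimality condition at $x_*$ eliminating one $g_*^{(k)}$), and your orthogonal-complement change of variables $G_N=V\widetilde{G}_N V^{\top}$ is just a basis-free implementation of the paper's substitution of $N+1$ columns of $P_N$, with the same final dimension count. The only cosmetic slip is that the constraints $P_N m_i=0$ confine the rows of $P_N$ (i.e., the columns of $P_N^{\top}$), not its columns, to the range of $V$; the factorization $P_N=\widetilde{P}_N V^{\top}$ and the rest of the argument are unaffected.
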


\begin{proof}
One can remove from the original SDP formulation those $p$ unnecessary points corresponding to $p$ function values in variable $F_N$ and to $p$ rows/columns in the Gram matrix variable $G_N$. Furthermore, the $N$ equations defining the iterations allow us to further substitute $N$ variables, i.e., to remove $N$ columns from $P_N$ and hence $N$ rows/columns from the Gram matrix variable $G_N$. The dimension of $G_N$ can finally be decreased by one, using the fact \rev{that} one of the $g_*^{(k)}$ may also be discarded, by substituting it using the optimality condition defining $x_*$. 
\end{proof}

Under \rev{the assumptions of Corollary~\ref{cor:Large_scale_FSLFOM}}, the large-scale assumption becomes $d\geq (n+1)(N+2)-N-p \rev{-1}$. For example, when considering methods where \revv{only the output from a single oracle} (among the $n$ possible $F^{(k)}$) is used \revv{at each}  iteration, we have that $p=(n-1)(N+1)$, which leads to $d\geq N+n+2$. 

\revv{Furthermore,} for many standard performance measures such as objective function accuracy $F_N-F_*$ or distance to optimality $\normpsq{x_N-x_*}$, one arbitrary point $x_i$ may be fixed to zero because solutions to the SDP are invariant with respect to translations. This results then in the large-scale assumption $d\geq N+n+1$. For $n=1$, we recover the standard $d\geq N+2$ appearing in the case of a single component in the objective function~\cite{taylor2015smooth}.\label{rem:simp_SDP}

The original SDP from Proposition~\ref{thm:sdp_pep} may be challenging to solve in practice, because of its potentially large size on the one hand and because it may lack an interior on the other hand. We observe that the simplified PEP described above typically improves the situation for both issues, reducing the size of the problem and solving in a lot of cases the issue of a lack of interior \rev{points}.
}

\section{Convex interpolation}
\label{sec:cvx_interp}
\modAT{
In this section, we study convex interpolation problems for different standard classes of convex functions. The underlying motivation is to obtain discrete characterizations of convex functions commonly arising in the context of convex optimization via first-order methods. More specifically, the classes of convex functions of interest for this section are all linearly Gram-representable (see Definition~\ref{def:GR_func}). Therefore, using those classes within the performance estimation framework will lead to tractable formulations providing tightness guarantees.}

The main technical tools from this section are borrowed from convex analysis; we refer to the seminal works~\cite{bauschke2011convex,JBHU,Book:Rockafellar,rockafellar1998variational} for details.
\subsection{Functional characteristics}
\label{sec:func_char}
Consider a proper, closed and convex function $f$. The main characteristics of interest for us are the following, all commonly appearing in the context of first-order convex optimization:
\begin{itemize}
\item[(a)] \emph{Smoothness}: there exists some $L\in\mathbb{R}^{++}\cinf$ such that the inequality $\frac{1}{L}\normd{ g_1 - g_2}\leq \normp{x_1-x_2}$ holds for all pairs  $x_1, x_2\in\E$ and corresponding subgradients $g_1, g_2 \in \Es$ (i.e., such that $g_1\in\partial f(x_1)$ and $g_2\in\partial f(x_2)$).
\item[(b)] \emph{Strong convexity}: there exists some $\mu\in\mathbb{R}^+$ such that the function $f(x)-\frac{\mu}{2}\normpsq{x}$ is convex.
\item[(c)] \emph{Gradient boundedness}: there exists some $M\in\mathbb{R}^+\cinf$ such that $\normd{g}\leq M$ holds for all subgradients $g\in\Es$ (i.e., such that $\exists x:\ g\in\partial f(x)$).
\item[(d)] \emph{ Domain boundedness}: there exists some $ D\in\mathbb{R}^+\cinf$ such that $\normp{x}\leq D$ holds for all $x$ belonging to the domain $\left\{x\in\E: f(x)<\pinf\right\}$.
\end{itemize}
Alternatively, domain and gradient boundedness can be specified in terms of  diameters instead of radii. 
\begin{itemize}
\item[(c$^\prime$)] \emph{Gradient boundedness}: there exists some $M\in\mathbb{R}^+\cinf$ such that the inequality $\normd{g_1-g_2}\leq M$ holds for all subgradients $g_1,g_2\in\Es$ (i.e., such that $\exists x_1,x_2:\ g_1\in\partial f(x_1)$ and $g_2\in\partial f(x_2)$).
\item[(d$^\prime$)] \emph{Domain boundedness}: there exists $D\in\mathbb{R}^+\cinf$ such that $\normp{x_1-x_2}\leq D$ holds for all pairs $x_1, x_2$ belonging to the domain $\left\{x\in\E: f(x)<\pinf\right\}$.
\end{itemize}

As some characteristics are incompatible with each other (e.g., gradient boundedness is incompatible with strong convexity, domain boundedness is incompatible with smoothness), we define the following three classes of functions combining specific pairs of properties.
\begin{definition} Let $f:\fdef$ be a proper, closed, and convex function, denoted by $f\in\mathcal{F}_{0,\infty}$. We say that
\begin{itemize}
\item $f\in\mathcal{F}_{\mu,L}(\E)$ ($L$-smooth $\mu$-strongly convex functions) if it satisfies conditions \emph{(a)} and \emph{(b)} with $\mu<L$;
\item $f\in\mathcal{C}_{M,L}(\E)$ ($L$-smooth $M$-Lipschitz convex functions) if it satisfies conditions \emph{(a)} and \emph{(c)};  alternatively, $f\in\mathcal{C}'_{M,L}(\E)$ if it satisfies \emph{(a)} and \emph{(c$^\prime$)};
\item $f\in\mathcal{S}_{D,\mu}(\E)$ ($D$-bounded $\mu$-strongly convex functions) if it satisfies conditions \emph{(b)} and \emph{(d)}; alternatively $f\in\mathcal{S}'_{D,\mu}(\E)$ if it satisfies \emph{(b)} and \emph{(d$^\prime$)}.
\end{itemize}
\end{definition}

Note that boundedness and smoothness constants are allowed to take the value~$\pinf$, in order to allow the use of unbounded (domain or gradient) and nonsmooth functions as well. We handle those using  conventions $1/{\pinf}=0$ and $\pinf-c=\pinf$ for any $c\in\mathbb{R}$.
By assuming $\mu<L$, we exclude the classes $\mathcal{F}_{L,L}(\E)$ for $L\geq 0$. Those only contain quadratic functions of the form $f(x)=\frac{L}{2}\normpsq{x}+\inner{b}{x}+c$ for some $b\in\Es$ and $c\in\R$, for which it would be straightforward to obtain interpolation conditions.

A basic building block for the smooth convex interpolation conditions proposed in~\cite{taylor2015smooth} comes from Fenchel--Legendre conjugation. In particular, when considering functions $f$ in the class $\mathcal{F}_{0,\infty}(\E)$, the relationship $f\in\mathcal{F}_{\mu,L}(\E) \Leftrightarrow f^*\in\mathcal{F}_{1/L,1/\mu}(\Es)$ was intensively used to require smoothness of the convex interpolant. In the following, we additionally use for functions $f$ in $\mathcal{F}_{0,\infty}(\E)$ the duality correspondences $f\in\mathcal{C}_{M,L}(\E) \Leftrightarrow f^*\in\mathcal{S}_{M,1/L}(\Es)$ and its variant $f\in\mathcal{C}'_{M,L}(\E) \Leftrightarrow f^*\in\mathcal{S}'_{M,1/L}(\Es)$, in order to include boundedness properties in the convex interpolating functions, along with smoothness.
\begin{theorem} Consider a function $f\in\mathcal{F}_{0,\infty}(\E)$. We have $f\in\mathcal{C}_{M,\infty}(\E)$ (resp., $f\in\mathcal{C}'_{M,\infty}(\E)$) if and only if $f^*\in\mathcal{S}_{M,0}(\Es)$ (resp., $f^*\in\mathcal{S}'_{M,0}(\Es)$).
\label{thm:ConjCML_SRmu}
\end{theorem}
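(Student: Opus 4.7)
The plan is to combine the standard Fenchel--Legendre duality with the subgradient correspondence for $f\in\mathcal{F}_{0,\infty}(\E)$: namely $f^{**}=f$, $f^*\in\mathcal{F}_{0,\infty}(\Es)$, and $g\in\partial f(x)\Leftrightarrow x\in\partial f^*(g)$. In particular this yields the set equality $\mathrm{range}\,\partial f=\dom \partial f^*$ as subsets of $\Es$, where $\mathrm{range}\,\partial f=\left\{g\in\Es:\exists x\in\E,\ g\in\partial f(x)\right\}$. I will first handle the radius version and then indicate the (identical) argument for the diameter version.

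For the forward direction of the non-primed statement, suppose $f\in\mathcal{C}_{M,\infty}(\E)$, i.e.\ every subgradient of $f$ lies in the closed ball $\bar B=\left\{g\in\Es:\normd{g}\leq M\right\}$. Then $\dom\partial f^*=\mathrm{range}\,\partial f\subseteq \bar B$. A classical convex-analytic fact (e.g.\ Rockafellar~\cite{Book:Rockafellar}, Thm.~23.4) asserts that $\relint{\dom f^*}\subseteq \dom\partial f^*$ for any proper closed convex function $f^*$; since $\dom f^*$ is convex it coincides with the closure of its relative interior, so
\begin{equation*}
\dom f^*\;\subseteq\; \overline{\dom\partial f^*}\;\subseteq\; \bar B,
\end{equation*}
using that $\bar B$ is closed. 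This yields $f^*\in\mathcal{S}_{M,0}(\Es)$. Conversely, if $f^*\in\mathcal{S}_{M,0}(\Es)$, any $g\in\partial f(x)$ satisfies $g\in\dom\partial f^*\subseteq \dom f^*\subseteq \bar B$, so $\normd{g}\leq M$, which gives $f\in\mathcal{C}_{M,\infty}(\E)$.

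For the primed (diameter) version, the very same reasoning applies after replacing $\bar B$ by the diameter condition. The defining property $f\in\mathcal{C}'_{M,\infty}(\E)$ is precisely that $\mathrm{range}\,\partial f$ has diameter at most $M$ in the dual norm, and diameter is preserved under the operations $\mathrm{range}\,\partial f=\dom\partial f^*$ and $\dom f^*\subseteq\overline{\dom\partial f^*}$ (closure does not enlarge diameter). This is exactly the defining condition of $\mathcal{S}'_{M,0}(\Es)$, and the converse direction goes through by the same inclusion of subgradients in the domain as in the radius case.

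The only non-routine step, and hence the main obstacle, is the chain $\mathrm{range}\,\partial f\;\subseteq\;\bar B\;\Longrightarrow\;\dom f^*\subseteq \bar B$, since the subgradient correspondence only gives direct control on $\dom\partial f^*$, not on $\dom f^*$ itself. The bridge is the inclusion $\dom f^*\subseteq \overline{\dom\partial f^*}$, which requires invoking the relative-interior result for proper closed convex $f^*$; this is precisely where the regularity assumption $f\in\mathcal{F}_{0,\infty}(\E)$ (which transfers to $f^*$ by closedness under conjugation) is used.
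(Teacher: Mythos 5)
Your proof is correct, and its core is the same conjugate-duality route the paper takes: the paper's own argument is a one-liner invoking the equivalence $g\in\partial f(x)\Leftrightarrow x\in\partial f^*(g)\Leftrightarrow f(x)+f^*(g)=\inner{g}{x}$ for $f\in\mathcal{F}_{0,\infty}(\E)$, which identifies $\mathrm{range}\,\partial f$ with $\dom\partial f^*$ and thereby trades gradient boundedness of $f$ for a boundedness property of $f^*$. What you add — and what the paper's one-sentence proof leaves implicit — is the bridge from $\dom\partial f^*$ to the full domain $\dom f^*$, which is the set actually appearing in the definition of $\mathcal{S}_{M,0}(\Es)$ (resp.\@ $\mathcal{S}'_{M,0}(\Es)$): since $\dom f^*$ may contain boundary points at which $\partial f^*$ is empty, the subgradient correspondence alone does not directly bound it. Your appeal to $\relint{\dom f^*}\subseteq\dom\partial f^*$, combined with closedness of the ball $\left\{g:\normd{g}\leq M\right\}$ (resp.\@ the fact that taking closures does not increase diameter), closes exactly this gap, so your write-up is a more complete version of the paper's argument rather than a different one. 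One wording slip worth fixing: a convex set need not \emph{coincide} with the closure of its relative interior (it is merely contained in it, with equality only after taking closures); the inclusion chain you actually display, $\dom f^*\subseteq\overline{\dom\partial f^*}\subseteq\left\{g:\normd{g}\leq M\right\}$, is the correct statement and is all your argument needs.
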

\begin{proof}
This follows from the equivalence: $g\in\partial f(x)\Leftrightarrow x\in\partial f^*(g) \Leftrightarrow f(x)+f^*(g)=\inner{g}{x}$ that holds for every function $f$ in $\mathcal{F}_{0,\pinf}(\E)$.
\end{proof}

\subsection{Interpolation conditions}
In this section, we provide interpolation conditions for the previously introduced three classes of functions. 
We start by recalling the following known interpolation result~\cite[Theorem 6]{taylor2015smooth}.\footnote{Theorem~\ref{thm:gencvxcomp} is formally proven in~\cite{taylor2015smooth} for the case of the standard inner product $\inner{x}{y}=x^\top\! y$ (and therefore also only for $\normstd{.}_2^2$). However, its proof can be rewritten in a completely straightforward manner to obtain the desired result for general inner products on $\E$ and self-adjoint positive definite linear operators $B$, and the corresponding induced primal and dual Euclidean norms.}
\begin{theorem} The set $\left\{(x_i,g_i,f_i)\right\}_{i\in I}$ is $\mathcal{F}_{\mu,L}$-interpolable if and only if the following set of conditions holds for every pair of indices $i \in I$ and $j \in I$:
\begin{align*}
f_i - f_j - \inner{g_j}{ x_i-x_j} \geq \frac{1}{2(1-\mu/L)}\bigg( \frac{1}{L}\normdsq{g_i-g_j} + &\mu \normpsq{x_i-x_j} \\ & - 2\frac{\mu}{L} \inner{g_j-g_i}{x_j-x_i}\bigg).
\end{align*}
\label{thm:gencvxcomp}
\end{theorem}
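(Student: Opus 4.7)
The plan is to prove the two implications separately, the first by algebraic verification and the second by an explicit construction via Fenchel conjugation.

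\textbf{Necessity.} I would start from the decomposition $f = h + \tfrac{\mu}{2}\normpsq{\cdot}$, where $h$ is convex and $(L-\mu)$-smooth whenever $f \in \mathcal{F}_{\mu,L}(\E)$. Applying the classical co-coercivity inequality for smooth convex functions,
\[
h(x_i) - h(x_j) - \inner{\tilde{\nabla} h(x_j)}{x_i - x_j} \ge \frac{1}{2(L-\mu)}\normdsq{\tilde{\nabla} h(x_i) - \tilde{\nabla} h(x_j)},
\]
and substituting $\tilde{\nabla} h(x_i) = g_i - \mu B x_i$ together with $h(x_i) = f_i - \tfrac{\mu}{2}\normpsq{x_i}$ should, after expanding $\normdsq{\cdot}$ via $\normdsq{s} = \inner{s}{B^{-1}s}$ and regrouping terms, reproduce the claimed inequality exactly.

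\textbf{Sufficiency.} The plan is to construct an interpolating function in two steps. First, I would reduce to the smooth convex case $\mu = 0$ by shifting by a quadratic: define transformed data $\tilde f_i = f_i - \tfrac{\mu}{2}\normpsq{x_i}$ and $\tilde g_i = g_i - \mu B x_i$. A direct algebraic check---essentially reversing the substitution above---shows that $\{(x_i, \tilde g_i, \tilde f_i)\}_{i \in I}$ satisfies the $\mathcal{F}_{0,L-\mu}$-interpolation inequality. If the transformed data admits an interpolant $\tilde F \in \mathcal{F}_{0,L-\mu}(\E)$, then $F = \tilde F + \tfrac{\mu}{2}\normpsq{\cdot}$ belongs to $\mathcal{F}_{\mu,L}(\E)$ and interpolates the original data. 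This reduces everything to constructing a smooth convex interpolant for the $\mu=0$ case. For the latter, I would pass to the Legendre-Fenchel conjugate---where $(L-\mu)$-smoothness becomes $1/(L-\mu)$-strong convexity---and look for a $1/(L-\mu)$-strongly convex function $\varphi$ on $\Es$ interpolating the dual data $(\tilde g_i, x_i, \inner{\tilde g_i}{x_i} - \tilde f_i)$. An explicit candidate is the pointwise maximum of affine pieces
\[
\varphi(s) = \max_{i \in I}\left\{\inner{\tilde g_i}{x_i} - \tilde f_i + \inner{s - \tilde g_i}{x_i}\right\} + \frac{1}{2(L-\mu)}\normdsq{s - s_0}
\]
for a suitable anchor $s_0$, or an infimal-convolution variant, chosen to guarantee strong convexity. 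Taking $\tilde F = \varphi^*$ then produces the desired $(L-\mu)$-smooth convex interpolant by the duality correspondence.

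The main obstacle is verifying that the explicit conjugate-space construction interpolates correctly, namely that $x_i \in \partial \varphi(\tilde g_i)$ with the prescribed value at each $\tilde g_i$. This is exactly where the assumed interpolation inequalities are used as certificates: the inequality for the transformed data implies that no other piece $j \ne i$ can strictly dominate piece $i$ at the point $\tilde g_i$, so that the active piece at $\tilde g_i$ is indeed piece $i$, yielding the prescribed subgradient $x_i$ and value. A careful duality argument (e.g., via $g \in \partial f(x) \Leftrightarrow x \in \partial f^*(g)$, as already used in Theorem~\ref{thm:ConjCML_SRmu}) then transfers these properties from $\varphi$ back to $\tilde F = \varphi^*$, concluding the proof.
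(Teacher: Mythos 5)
Your overall route --- necessity by applying co-coercivity to $h=f-\tfrac{\mu}{2}\normpsq{\cdot}$, sufficiency by the quadratic shift to $\mathcal{F}_{0,L-\mu}$ followed by Fenchel conjugation --- is precisely the strategy of the proof this paper points to (it does not reprove Theorem~\ref{thm:gencvxcomp} but cites \cite[Theorem~6]{taylor2015smooth}), and it is mirrored here in the proofs of Theorems~\ref{thm:gencvx_SRmu} and~\ref{thm:gencvx_CML}. The necessity half is fine (the substitution $\tilde f_i=f_i-\tfrac{\mu}{2}\normpsq{x_i}$, $\tilde g_i=g_i-\mu Bx_i$ does reproduce the stated inequality exactly, using the standard fact that for convex $f$, $L$-smoothness is equivalent to convexity of $\tfrac{L}{2}\normpsq{\cdot}-f$), and so is the reduction of sufficiency to constructing a $\tfrac{1}{L-\mu}$-strongly convex interpolant of the dual data $(\tilde g_i, x_i, \inner{\tilde g_i}{x_i}-\tilde f_i)$.

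The gap is in the explicit dual construction you propose. With $\varphi(s)=\max_{i}\{c_i+\inner{s-\tilde g_i}{x_i}\}+\tfrac{1}{2(L-\mu)}\normdsq{s-s_0}$, where $c_i=\inner{\tilde g_i}{x_i}-\tilde f_i$, interpolation fails at every data point except possibly one: $\varphi(\tilde g_j)\geq c_j+\tfrac{1}{2(L-\mu)}\normdsq{\tilde g_j-s_0}>c_j$ whenever $\tilde g_j\neq s_0$, and the subgradients at $\tilde g_j$ are shifted by the gradient of the anchored quadratic, so $x_j\in\partial\varphi(\tilde g_j)$ also fails; no single anchor $s_0$ can fix this, and the issue is not the ``active piece'' argument you invoke but the global quadratic itself. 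The repair is to attach one quadratic to each piece: take $\varphi(s)=\max_i\bigl\{c_i+\inner{s-\tilde g_i}{x_i}+\tfrac{1}{2(L-\mu)}\normdsq{s-\tilde g_i}\bigr\}$, which is exactly the construction of Theorem~\ref{thm:gencvx_SRmu} (with $D=\pinf$ and modulus $\tfrac{1}{L-\mu}$, read in the dual space); equivalently, perform a second quadratic shift in the dual to reduce to plain $\mathcal{F}_{0,\infty}$-interpolation by a maximum of affine functions and add back $\tfrac{1}{2(L-\mu)}\normdsq{s}$ afterwards. With that replacement, the dual-transformed inequalities are precisely what guarantees that piece $j$ attains the maximum at $s=\tilde g_j$ with value $c_j$ and subgradient $x_j$, and your concluding conjugation step ($\tilde F=\varphi^*$, then $F=\tilde F+\tfrac{\mu}{2}\normpsq{\cdot}$) goes through; the degenerate case $L=\pinf$ should be handled separately (no conjugation needed, the quadratic terms vanish under the convention $1/\pinf=0$).
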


In particular, the simpler interpolation conditions for closed, convex proper functions (i.e., $\mathcal{F}_{0,\pinf}(\E)$ interpolation) are \begin{equation}
f_i-f_j-\inner{g_j}{x_i-x_j}\geq 0 \ \forall i,j \in I,\label{eq:ns_cvx_interp}
\end{equation}
which will serve to develop our next interpolation conditions. We start with $\mathcal{S}_{D,\mu}(\E)$-interpolability and later obtain $\mathcal{C}_{M,L}(\E)$-interpolation conditions using conjugation.

\begin{theorem} The set $\left\{(x_i,g_i,f_i)\right\}_{i\in I}$ is $\mathcal{S}_{D,\mu}$- ($D$-bounded, $\mu$-strongly convex) (resp., $\mathcal{S}'_{D,\mu}$-) interpolable if and only if the following set of conditions holds for every pair of indices $i \in I$ and $j \in I$:
\begin{align*}
&f_i - f_j - \inner{g_j}{x_i-x_j} \geq \frac{\mu}{2}\normpsq{x_i-x_j},\\
&\normp{x_j}\leq D \quad \text{(resp., } \normp{x_j-x_i}\leq D\text{)}.
\end{align*}
\label{thm:gencvx_SRmu}
\end{theorem}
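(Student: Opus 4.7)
The plan is to prove the two directions of the equivalence separately. Necessity is immediate: if $F\in\mathcal{S}_{D,\mu}(\E)$ (resp.\ $\mathcal{S}'_{D,\mu}(\E)$) interpolates $\left\{(x_i,g_i,f_i)\right\}_{i\in I}$, then the defining inequality of $\mu$-strong convexity applied to the pair $(x_i,x_j)$ with subgradient $g_j\in\partial F(x_j)$ gives exactly the first listed inequality, while domain boundedness directly yields $\normp{x_j}\leq D$ (resp.\ $\normp{x_j-x_i}\leq D$), since each $x_i$ lies in $\dom F$.

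For sufficiency, the strategy is to first produce an $\mathcal{F}_{\mu,\infty}$-interpolant on the whole of $\E$ and then constrain its domain by adding an indicator function. Setting $L\to\pinf$ in Theorem~\ref{thm:gencvxcomp} collapses the right-hand side to $\frac{\mu}{2}\normpsq{x_i-x_j}$, so the first assumption is precisely the $\mathcal{F}_{\mu,\pinf}$-interpolability condition. This produces a function $\tilde F\in\mathcal{F}_{\mu,\pinf}(\E)\subseteq\mathcal{F}_{0,\pinf}$ with $\tilde F(x_i)=f_i$ and $g_i\in\partial\tilde F(x_i)$ for every $i\in I$.

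Now choose $C\subseteq\E$ to be the closed ball $\left\{x\in\E:\normp{x}\leq D\right\}$ in the $\mathcal{S}_{D,\mu}$ case, or the closed convex hull $C=\overline{\mathrm{conv}}\left\{x_i\right\}_{i\in I}$ in the $\mathcal{S}'_{D,\mu}$ case, and define $F=\tilde F+\iota_C$. Because each $x_i\in C$ we have $F(x_i)=\tilde F(x_i)=f_i$; using $0\in\partial\iota_C(x_i)$ together with the always-valid inclusion $\partial\tilde F(x_i)+\partial\iota_C(x_i)\subseteq\partial F(x_i)$, we also get $g_i\in\partial F(x_i)$. Moreover, $F-\frac{\mu}{2}\normpsq{\cdot}=\left(\tilde F-\frac{\mu}{2}\normpsq{\cdot}\right)+\iota_C$ is a sum of convex functions, hence $F$ is proper, closed, and $\mu$-strongly convex; and $\dom F\subseteq C$ carries the required radius (resp.\ diameter) bound.

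The only delicate point is the diameter version, where one must check $\mathrm{diam}\left(\overline{\mathrm{conv}}\left\{x_i\right\}_{i\in I}\right)\leq D$. This follows from the standard fact that, by expressing two points of the convex hull as convex combinations of the $x_i$'s and applying the triangle inequality twice, the diameter of the convex hull equals $\max_{i,j}\normp{x_i-x_j}$, which is bounded by $D$ under the second hypothesis; closure preserves the bound. This completes the construction in both cases.
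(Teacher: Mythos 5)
Your proof is correct, and it reaches sufficiency by a more modular route than the paper. The paper's proof writes down an explicit interpolant --- the maximum of the quadratic lower bounds $f_i+\inner{g_i}{x-x_i}+\frac{\mu}{2}\normpsq{x-x_i}$, set to $+\infty$ outside $\mathrm{conv}(\{x_i\}_{i\in I})$ --- and then verifies by hand that it reproduces the values $f_j$, admits the $g_j$ as subgradients, and has a $D$-bounded domain. You instead invoke Theorem~\ref{thm:gencvxcomp} with $L=\pinf$ (legitimate under the paper's conventions, and exactly how the paper itself derives the $\mathcal{F}_{0,\pinf}$ conditions~\eqref{eq:ns_cvx_interp}) to obtain a $\mu$-strongly convex interpolant $\tilde F$, and then add the indicator $\iota_C$ of a suitable closed convex set $C$ (the $D$-ball in the radius case, the closed convex hull of the points in the diameter case). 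The key step that makes this work --- the qualification-free inclusion $\partial\tilde F(x_i)+\partial\iota_C(x_i)\subseteq\partial(\tilde F+\iota_C)(x_i)$ together with $0\in\partial\iota_C(x_i)$ --- is stated explicitly, and properness, closedness, strong convexity and the domain bound are all checked; your diameter estimate for the convex hull is the same double triangle-inequality argument the paper uses. In effect, the paper's construction is your $\tilde F+\iota_{\mathrm{conv}(\{x_i\}_{i\in I})}$ for one particular choice of $\tilde F$, so the underlying idea coincides, but your version delegates the interpolation verification to the already-established Theorem~\ref{thm:gencvxcomp} (gaining brevity and reuse, including the cleaner choice of the $D$-ball in the radius case), whereas the paper's self-contained construction avoids relying on the $L=\pinf$ limiting case of that theorem and exhibits the interpolating function completely explicitly.
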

\begin{proof}
Every function $f\in\mathcal{S}_{D,\mu}(\E)$ (resp., $f\in\mathcal{S}'_{D,\mu}(\E)$) satisfies the conditions. To prove that they are sufficient, consider the following construction:
\begin{align*}
f(x)=\left\{\begin{array}{ll}
\max_{i \in I} \left\{f_i+\inner{g_i}{x-x_i}+\frac{\mu}{2}\normpsq{x-x_i}\right\} \quad &  \text{if } x\in \text{conv}\left(\left\{x_i\right\}_{i\in I}\right), \\
\pinf & \text{elsewhere.}
\end{array}\right.
\end{align*}
Observe that $f$ is $\mu$-strongly convex (convex domain, and maximum of $\mu$-strongly convex functions) and that it does interpolate the set $\left\{(x_i,g_i,f_i)\right\}_{i\in I}$. First, we have 
\begin{align*}
f(x_j)&=\max_{i \in I} \left\{f_i+\inner{g_i}{x_j-x_i}+\frac{\mu}{2}\normpsq{x_j-x_i}\right\}\\
&\leq f_j,
\end{align*}
using interpolation conditions. By noting that the maximum is bigger than taking individually the component $j$, we also have that \[ \max_{i \in I} \left\{f_i+\inner{g_i}{x_j-x_i}+\frac{\mu}{2}\normpsq{x_j-x_i}\right\}\geq f_j,\]
which allows us to conclude that $f(x_j)=f_j$. To obtain that $g_j\in\partial f(x_j)$, let us write 
\begin{align*}
f(x)&=\max_{i \in I}\left\{f_i+\inner{g_i}{x-x_i}+\frac{\mu}{2}\normpsq{x-x_i}\right\}\\
&\geq \max_{i \in I} \left\{f_i+\inner{g_i}{x-x_i}\right\}\\
&\geq \rev{f_j+\inner{g_j}{x-x_j}}.
\end{align*}

Finally, note that $\text{conv}\left(\left\{x_i\right\}_{i\in I}\right)\subseteq B_{\E}(0,D)$, where $B_{\E}(0,D)$ is the ball centered at the origin with radius $D$ according to norm $\normp{.}$. Indeed, choosing $z=\sum_{i\in I}\lambda_i x_i$ with $\lambda_i\geq 0$ and $\sum_{i\in I}\lambda_i=1$, we have $\normp{z}\leq \sum_{i\in I}\lambda_i  \normp{x_i}\leq D,$ and $f$ has a bounded domain of radius $D$. Hence $\left\{(x_i,g_i,f_i)\right\}_{i\in I}$ is $\mathcal{S}_{D,\mu}$-interpolable, which concludes the proof for the $\mathcal{S}_{D,\mu}$ part.

To obtain the same result for $\mathcal{S}'_{D,\mu}$, note that $\forall y,z\in \text{conv}(\left\{x_i\right\}_{i \in I})$, we can write
$y=\sum_i \lambda_i x_i$ and $z=\sum_i \gamma_i x_i$ with $\lambda_i,\gamma_i\geq0$ and $\sum_i \lambda_i=\sum_i \gamma_i=1$. Hence,
\modAT{$
\normp{y-z}\leq \sum_i \lambda_i \sum_j \gamma_j \normp{ x_i -  x_j} \leq D.$}
\end{proof}

This interpolation result can be used immediately to develop interpolation conditions for the class of convex functions with bounded gradient, using the conjugate duality between smoothness and strong convexity on the one hand and gradient and domain boundedness on the other hand.
\begin{theorem} The set $\left\{(x_i,g_i,f_i)\right\}_{i\in I}$ is $\mathcal{C}_{M,L}$- ($L$-smooth with $M$-bounded subgradients) (resp., $\mathcal{C}'_{M,L}$-) interpolable if and only if the following set of conditions holds for every pair of indices $i \in I$ and $j \in I$:
\begin{align}
&f_i - f_j - \inner{g_j}{x_i-x_j} \geq \frac{1}{2L}\normdsq{g_i-g_j},\label{cond:CML1}\\
&\normd{g_j}\leq M \quad \text{(resp., } \normd{g_j-g_i}\leq M\text{)}.\label{cond:CML2}
\end{align}
\label{thm:gencvx_CML}
\end{theorem}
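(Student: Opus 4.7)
The plan is to reduce this to Theorem~\ref{thm:gencvx_SRmu} via Fenchel-Legendre conjugation, exactly in the spirit of Theorem~\ref{thm:ConjCML_SRmu}. The first step is to extend that correspondence to finite $L$: combining the classical duality between $L$-smoothness and $1/L$-strong convexity (already used in~\cite{taylor2015smooth} to derive Theorem~\ref{thm:gencvxcomp}) with the duality between $M$-bounded subgradients and $M$-bounded domain (Theorem~\ref{thm:ConjCML_SRmu}), one obtains
\[ f\in\mathcal{C}_{M,L}(\E) \iff f^*\in\mathcal{S}_{M,1/L}(\Es), \quad f\in\mathcal{C}'_{M,L}(\E) \iff f^*\in\mathcal{S}'_{M,1/L}(\Es). \]

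Next, I would transform the interpolation data under conjugation. Using the equivalence $g\in\partial f(x)\Leftrightarrow f(x)+f^*(g)=\inner{g}{x}$ recalled in the proof of Theorem~\ref{thm:ConjCML_SRmu}, the set $\left\{(x_i,g_i,f_i)\right\}_{i\in I}$ is $\mathcal{C}_{M,L}$-interpolable (resp.\@ $\mathcal{C}'_{M,L}$-interpolable) if and only if the dual set $\left\{(g_i,x_i,\tilde{f}_i)\right\}_{i\in I}$ with $\tilde{f}_i=\inner{g_i}{x_i}-f_i$ is $\mathcal{S}_{M,1/L}(\Es)$-interpolable (resp.\@ $\mathcal{S}'_{M,1/L}(\Es)$-interpolable). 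Here the roles of points and subgradients swap: the $g_i$'s become the evaluation points of $f^*$ (in $\Es$) and the $x_i$'s become the subgradients (in $\E$).

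The final step is to apply Theorem~\ref{thm:gencvx_SRmu} to the dual data, with $\mu$ replaced by $1/L$, $D$ by $M$, and the primal norm $\normp{\cdot}$ swapped with $\normd{\cdot}$ since we now live on $\Es$. This yields
\[ \tilde{f}_i-\tilde{f}_j-\inner{g_i-g_j}{x_j}\geq \frac{1}{2L}\normdsq{g_i-g_j}, \]
together with $\normd{g_j}\leq M$ (resp.\@ $\normd{g_j-g_i}\leq M$). Substituting $\tilde{f}_i=\inner{g_i}{x_i}-f_i$ and expanding gives
\[ f_j-f_i-\inner{g_i}{x_j-x_i}\geq \frac{1}{2L}\normdsq{g_i-g_j}, \]
which, after swapping the indices $i\leftrightarrow j$, is exactly~\eqref{cond:CML1}. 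The boundedness condition translates directly into~\eqref{cond:CML2}.

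The main obstacle is essentially bookkeeping: one has to track carefully how the dual pairing $\inner{\cdot}{\cdot}:\Es\times\E\to\R$ and the norms $\normp{\cdot},\normd{\cdot}$ swap when moving from $f$ on $\E$ to $f^*$ on $\Es$, and check that the extension of Theorem~\ref{thm:ConjCML_SRmu} to finite $L$ goes through unchanged (which it does because both dualities combine additively in the conjugation correspondence). Once this is set up correctly, the algebra reducing the dual interpolation inequality to~\eqref{cond:CML1} is routine.
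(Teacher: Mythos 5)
Your proposal is correct and follows essentially the same route as the paper: pass to the conjugate data $\{(g_i,x_i,\inner{g_i}{x_i}-f_i)\}_{i\in I}$ via the duality $f\in\mathcal{C}_{M,L}(\E)\Leftrightarrow f^*\in\mathcal{S}_{M,1/L}(\Es)$ (resp.\@ the primed variant) and apply Theorem~\ref{thm:gencvx_SRmu} on $\Es$ with $\mu=1/L$, $D=M$. The algebraic translation back to~\eqref{cond:CML1}--\eqref{cond:CML2}, including the index swap, is carried out correctly.
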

\begin{proof}
Note that a function $f\in\mathcal{C}_{M,L}(\E)$ (resp., $f\in\mathcal{C}'_{M,L}(\E)$) interpolates the set $\left\{(x_i,g_i,f_i)\right\}_{i\in I}$ if and only if there exists a corresponding conjugate function\break $f^*\in\mathcal{S}_{M,1/L}(\Es)$ (resp., $f^*\in\mathcal{S}'_{M,1/L}(\Es)$) interpolating the conjugate set\break $\{(g_i,x_i,\inner{g_i}{x_i}-f_i)\}_{i\in I}=\{(\tilde{x}_i,\tilde{g}_i,\tilde{f}_i)\}_{i\in I}$ (see \secref{sec:func_char}). Using interpolation conditions from Theorem~\ref{thm:gencvx_SRmu}, such a conjugate function $f^*$ exists if and only if
\begin{align*}
&\tilde{f}_i-\tilde{f}_j-\inner{\tilde{x}_i-\tilde{x}_j}{\tilde{g}_j}\geq \frac{1}{2L} \normdsq{\tilde{x}_i-\tilde{x}_j},\\
&\normd{\tilde{x}_j}\leq M  \quad \text{(resp., } \normd{\tilde{x}_j-\tilde{x}_i}\leq M\text{)},
\end{align*}
which are respectively equivalent to conditions~\eqref{cond:CML1} and~\eqref{cond:CML2}.
\end{proof}

\subsection{Indicator and support functions}
The use of projection (to deal with constraints) and regularization is so recurrent in optimization that we dedicate the next lines to interpolation procedures specifically tailored to deal with them.  

\paragraph{Indicator functions} 
In our setting, an indicator function is a closed convex function taking only values $0$ and $\infty$, for which it can be shown that the domain must be a closed convex set. As explained earlier, this class of \rev{functions} is particularly interesting when considering projection operators in the context of performance estimation, as a proximal step over an indicator function is equivalent to a projection on its domain.

Given such a proper and closed convex function $i:\E\rightarrow\left\{0,\pinf\right\}$, we say that it is a $D$-bounded indicator function (which we denote by $f\in\mathcal{I}_{D}(\E)$---resp., $f\in\mathcal{I}'_{D}(\E)$) if there exists a radius (resp., a diameter) $0\leq D \leq \pinf$ such that $\normp{x}\leq D$ (resp., $\normp{x_1-x_2}\leq D$) holds for all $x$ belonging to the domain $\left\{x: i(x)=0\right\}$ (resp., for all $x_1, x_2$ belonging to the domain $\left\{x: i(x)=0\right\}$).

This corresponds to a particular case of the $\mathcal{S}_{D,\mu}$- (or $\mathcal{S}'_{D,\mu}$-) interpolation problem with $\mu=0$. Note, however, that indicator function interpolation is not completely straightforward from $\mathcal{S}'_{D,\mu}$-interpolation, as, for example, requiring the corresponding interpolation constraints in addition to $f_i=0$ would not a priori guarantee that the interpolated function from Theorem~\ref{thm:gencvx_SRmu} would satisfy $f(x)=0$ on $\dom f$. 
\begin{theorem}
 The set $\left\{(x_i,g_i,f_i)\right\}_{i\in I}$ is $\mathcal{I}_D$- (resp., $\mathcal{I}'_D$-) interpolable, i.e., interpolable by a $D$-bounded indicator, if and only if the following inequalities hold  for every pair of indices $i \in I$ and $j \in I$:
\begin{align}
&f_i=0,\notag\\
&\inner{g_j}{x_i-x_j}\leq 0, \label{Eq:Id-Interp}\\
&\normp{x_i}\leq D  \quad \text{(resp., } \normp{x_j-x_i}\leq D\text{)}.\notag
\end{align}
\label{thm:iffproj}
\end{theorem}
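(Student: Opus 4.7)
The plan is to follow the same pattern as Theorem~\ref{thm:gencvx_SRmu}: necessity is routine, and for sufficiency we will exhibit an explicit indicator function realizing the interpolation.

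For necessity, assume there exists $f\in\mathcal{I}_D(\E)$ (resp.\ $\mathcal{I}'_D(\E)$) with $f(x_i)=f_i$ and $g_i\in\partial f(x_i)$ for all $i\in I$. The condition $f_i=0$ is immediate since the only finite value of an indicator is $0$; the boundedness of the $x_i$ follows from the definition of $\mathcal{I}_D$ (resp.\ $\mathcal{I}'_D$); and the subgradient inequality applied at the pair $(x_i,x_j)$ together with $f_i=f_j=0$ gives $0\geq \inner{g_j}{x_i-x_j}$.

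For sufficiency, my candidate interpolant will be the indicator of the convex hull of the data points,
\begin{equation*}
f(x)=\begin{cases} 0 & \text{if } x\in Q,\\ \pinf & \text{otherwise,}\end{cases}\quad \text{with }Q=\mathrm{conv}\!\left(\{x_i\}_{i\in I}\right).
\end{equation*}
Clearly $f\in\mathcal{F}_{0,\infty}(\E)$ with $f(x_i)=0=f_i$. The key step is showing $g_j\in\partial f(x_j)=N_Q(x_j)$, i.e.\ $\inner{g_j}{y-x_j}\leq 0$ for all $y\in Q$. Writing $y=\sum_{i\in I}\lambda_i x_i$ with $\lambda_i\geq 0$ and $\sum_i\lambda_i=1$, linearity of the pairing gives
\begin{equation*}
\inner{g_j}{y-x_j}=\sum_{i\in I}\lambda_i\inner{g_j}{x_i-x_j}\leq 0,
\end{equation*}
where the inequality uses exactly hypothesis~\eqref{Eq:Id-Interp}. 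This is really the only substantive calculation in the proof.

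Finally, I need to verify that $Q$ inherits the $D$-boundedness of the data. For the $\mathcal{I}_D$ case, any convex combination $y=\sum_i\lambda_i x_i$ satisfies $\normp{y}\leq \sum_i\lambda_i\normp{x_i}\leq D$ by the triangle inequality, so $Q\subseteq B_\E(0,D)$ and $f\in\mathcal{I}_D(\E)$. For the $\mathcal{I}'_D$ variant, I would replicate the diameter argument already used at the end of the proof of Theorem~\ref{thm:gencvx_SRmu}: for $y=\sum_i\lambda_ix_i$ and $z=\sum_i\gamma_ix_i$ in $Q$, a double convex combination bound gives $\normp{y-z}\leq\sum_{i,j}\lambda_i\gamma_j\normp{x_i-x_j}\leq D$, so $f\in\mathcal{I}'_D(\E)$. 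No step here is delicate; the only place where one must pay attention is not confusing $g_j\in N_Q(x_j)$ (which is what we need) with the a priori stronger requirement that $g_j$ be a subgradient of $f$ at every point of $Q$, which is not needed because interpolation only requires the subgradient property at the listed points $x_j$.
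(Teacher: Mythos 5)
Your proof is correct and follows essentially the same route as the paper: sufficiency is established by taking the indicator of a convex set built from the data points, with the $D$-boundedness (radius or diameter) obtained exactly as in Theorem~\ref{thm:gencvx_SRmu} via convex combinations. The only difference is cosmetic: the paper first forms the polyhedron $Q=\left\{x\in\E:\inner{g_j}{x-x_j}\leq 0\ \forall j\in I\right\}$ and then intersects it with $\mathrm{conv}\left(\{x_i\}_{i\in I}\right)$, whereas you use the convex hull directly and verify the normal-cone condition $g_j\in N_Q(x_j)$ by expanding convex combinations; since the hull is already contained in that polyhedron, the two constructed interpolants coincide.
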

\begin{proof} Any function $f\in\mathcal{I}_D(\E)$ (resp., $f\in\mathcal{I}'_D(\E)$) satisfies those conditions. To prove that they are sufficient, let us construct a convex set whose indicator  function interpolates the set $\left\{(x_i,g_i,0)\right\}$. That is, we construct a closed convex set $Q$ containing all $x_i$'s, for which $\normp{x}\leq D$ holds $\forall x\in Q$ (resp., $\normp{x-y}\leq D$ $\forall x,y \in Q$), and such that $\inner{g_i}{x-x_i}\leq 0$ holds $\forall x\in Q$. 

We start with the simpler case $D=\pinf$, by considering the polyhedral set $$Q=\left\{x\in\E \ | \ \inner{a_j}{x}\leq b_j \ \forall j\in I\right\}$$
with $a_j=g_j$ and $b_j=\inner{g_j}{x_j}$. The construction guarantees that $x_i\in Q$. Indeed, by condition~\eqref{Eq:Id-Interp} we have $\inner{g_j}{x_i}\leq \inner{g_j}{x_j},$
which is equivalent to $\inner{a_j}{x_i}\leq b_j$ using the definitions of $a_j$ and $b_j$, and therefore guarantees that $x_i\in Q$. 

In order to add the boundedness requirement, we replace the set $Q$ by the following $\tilde{Q}=Q\,\cap \text{ conv}(\left\{x_i\right\}_{i\in I})$. 
This new set $\tilde{Q}$ is still convex (intersection of two convex sets); it also trivially still satisfies inclusions $x_i\in \tilde{Q}$ (which are by construction both contained in $Q$ and $\text{conv}(\left\{x_i\right\}_{i})$) and conditions $\inner{g_i}{x-x_i}\leq 0$ $\forall x\in \tilde{Q}$ (since $\tilde{Q}\subseteq Q$). In addition, \rev{$\tilde{Q}$} has a radius bounded above by $D$, because $D$ is an upper bound on the radius (resp., diameter) of $\text{conv}(\left\{x_i\right\}_{i\in I})$. It is therefore clear that the indicator function $I_{\tilde{Q}}\in\mathcal{I}_D(\E)$ (resp., $\mathcal{I}'_D(\E)$) interpolates $\left\{(x_i,g_i,0)\right\}_{i\in I}$.
\end{proof}

\paragraph{Support functions}  It is a standard observation that support functions are convex conjugates of indicator functions. Indeed, the support function for the closed convex set $Q\subseteq \E$ is defined as
\[\sigma_Q(s)=\sup_{x\in Q} \inner{s}{x}=\sup_{x\in\E} \inner{s}{x}-I_Q(x).\]
Support functions are very commonly used in applications. In particular, all norms, which are used for regularization, are support functions (e.g., the $l_1$ norm is the support function of the unit ball for $\normstd{.}_\infty$). 

Denote the set of support functions with an $M$-Lipschitz condition by $\mathcal{I}^*_M(\E)$ (resp., $\mathcal{I}'^*_M(\E)$). Using conjugacy, interpolation conditions for indicator functions immediately give us the equivalent result for support functions.  Indeed, requiring a set $S=\left\{(x_i,g_i,f_i)\right\}_{i\in I}$ to be $\mathcal{I}^*_M$- (resp., $\mathcal{I}'^*_M$-) interpolable is equivalent to requiring the set $\tilde{S}=\left\{(g_i,x_i,\inner{g_i}{x_i}-f_i)\right\}_{i\in I}$ to be $\mathcal{I}_M$- (resp., $\mathcal{I}'_M$-) interpolable, and we obtain the following consequence of Theorem~\ref{thm:iffproj}.
\begin{corollary}  The set $\left\{(x_i,g_i,f_i)\right\}_{i\in I}$ is $\mathcal{I}^*_M$- (resp., $\mathcal{I}'^*_M$-) interpolable, i.e., interpolable by a support function with $M$-bounded subgradients, if and only if the following inequalities hold for every pair of indices $i \in I$ and $j \in I$:
\begin{align*}
\inner{g_i}{x_i}-f_i&=0,\\
\inner{g_i-g_j}{x_j}&\leq 0,\\
\normd{g_i}&\leq M \quad \text{(resp., }\normd{g_i-g_j}\leq M\text{)}.
\end{align*}
\label{cor:iffsupp}
\end{corollary}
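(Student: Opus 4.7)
The plan is to reduce the statement directly to Theorem~\ref{thm:iffproj} using Fenchel-Legendre conjugation, mirroring the idea already used in the proof of Theorem~\ref{thm:gencvx_CML} to pass from $\mathcal{S}_{D,\mu}$- to $\mathcal{C}_{M,L}$-interpolation. The key reduction is: a set $\{(x_i,g_i,f_i)\}_{i\in I}$ is $\mathcal{I}^*_M(\E)$-interpolable (resp.\ $\mathcal{I}'^*_M(\E)$-interpolable) if and only if the ``conjugated'' data $\{(\tilde{x}_i,\tilde{g}_i,\tilde{f}_i)\}_{i\in I}:=\{(g_i,x_i,\inner{g_i}{x_i}-f_i)\}_{i\in I}$ is $\mathcal{I}_M(\Es)$-interpolable (resp.\ $\mathcal{I}'_M(\Es)$-interpolable). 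I would establish this reduction using two ingredients already recalled in the paper: (i) for any $f\in\mathcal{F}_{0,\infty}$, the equivalence $g\in\partial f(x) \Leftrightarrow x\in\partial f^*(g) \Leftrightarrow f(x)+f^*(g)=\inner{g}{x}$; and (ii) the observation that $f$ is a support function on $\E$ with subgradients of dual norm at most $M$ precisely when $f^*$ is an indicator function on $\Es$ whose domain lies in the $\normd{\cdot}$-ball of radius (resp.\ diameter) $M$.

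Once this reduction is in place, I would apply Theorem~\ref{thm:iffproj} directly to $\{(\tilde{x}_i,\tilde{g}_i,\tilde{f}_i)\}_{i\in I}$, understanding that the theorem is stated on $\E$ but its proof applies verbatim on $\Es$ since the primal/dual setting is symmetric, with $\normp{\cdot}$ and $\normd{\cdot}$ simply swapping roles. Reading off the three resulting conditions and translating back: $\tilde{f}_i=0$ becomes $\inner{g_i}{x_i}-f_i=0$; the subgradient inequality $\inner{\tilde{g}_j}{\tilde{x}_i-\tilde{x}_j}\leq 0$ becomes $\inner{x_j}{g_i-g_j}\leq 0$, which by symmetry of the pairing is the stated $\inner{g_i-g_j}{x_j}\leq 0$; and the norm bound $\normp{\tilde{x}_i}\leq M$ (resp.\ $\normp{\tilde{x}_i-\tilde{x}_j}\leq M$), now interpreted in $\Es$, becomes $\normd{g_i}\leq M$ (resp.\ $\normd{g_i-g_j}\leq M$). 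This yields exactly the three conditions listed in the corollary.

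The main thing to handle carefully, and the only real source of possible confusion, is bookkeeping: keeping straight which variables live in $\E$ and which in $\Es$ after conjugation, verifying that the primal and dual norms swap correctly, and checking that the two variants (radius and diameter, i.e.\ the $\mathcal{I}^*_M$ vs.\ $\mathcal{I}'^*_M$ cases) transport consistently through conjugation. Everything else is a direct corollary of Theorem~\ref{thm:iffproj} with no additional computation required.
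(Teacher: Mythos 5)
Your proposal is correct and follows essentially the same route as the paper: the corollary is obtained there precisely by noting that $\mathcal{I}^*_M$ (resp.\ $\mathcal{I}'^*_M$)-interpolability of $\left\{(x_i,g_i,f_i)\right\}_{i\in I}$ is equivalent, via Fenchel--Legendre conjugation, to $\mathcal{I}_M$ (resp.\ $\mathcal{I}'_M$)-interpolability of $\left\{(g_i,x_i,\inner{g_i}{x_i}-f_i)\right\}_{i\in I}$, and then applying Theorem~\ref{thm:iffproj}. Your additional care about the primal/dual bookkeeping and the radius-versus-diameter variants is exactly the (implicit) content of the paper's short argument, so nothing is missing.
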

\subsection{Smooth nonconvex interpolation} \label{sec:smoothnconvex}In this short section, we derive interpolation conditions for smooth, not necessarily convex, functions. Those conditions are also linearly Gram-representable and can be used to obtain tight versions of~\eqref{Intro:dPEP} for nonconvex optimization. 
\begin{definition} Let $L\in\R^{+}$. A differentiable function $f:\fdef$ is $L$-smooth, denoted by $f\in\mathcal{F}_{-L,L}(\E)$), if it satisfies the following condition $\forall x,y\in\E$:
\[ \big\lvert f(x)+\inner{\nabla f(x)}{y-x}-f(y)\bigr\rvert \leq \frac{L}{2}\normpsq{x-y}.\]
\end{definition}

The following lemma will be used to derive interpolation conditions for $f\in\mathcal{F}_{-L,L}(\E)$ from the smooth convex case.
\begin{lemma} Let $L\in\R^{+}$, and consider a function $f:\fdef$. We have the equivalence $f\in\mathcal{F}_{-L,L}(\E)\Leftrightarrow f+\frac{L}{2}\normpsq{x}\in\mathcal{F}_{0,2L}(\E)$.\label{lem:fornoncvxsmoothinterp}
\end{lemma}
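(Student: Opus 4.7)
The plan is to set $g(x) = f(x) + \frac{L}{2}\normpsq{x}$ and translate the two-sided inequality defining $\mathcal{F}_{-L,L}(\E)$ into the standard convex descent-lemma characterization of $\mathcal{F}_{0,2L}(\E)$. Since $\frac{L}{2}\normpsq{x}=\frac{L}{2}\inner{Bx}{x}$ is differentiable with gradient $LBx\in\Es$ (using self-adjointness of $B$), $g$ is differentiable whenever $f$ is, and $\nabla g(x) = \nabla f(x) + LBx$.

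The key computation is to expand, for arbitrary $x,y\in\E$,
\begin{align*}
g(y)-g(x)-\inner{\nabla g(x)}{y-x}
&= f(y)-f(x)-\inner{\nabla f(x)}{y-x}\\
&\quad + \tfrac{L}{2}\bigl[\inner{By}{y}-\inner{Bx}{x}\bigr]-\inner{LBx}{y-x}.
\end{align*}
Using the self-adjointness of $B$, the last two terms simplify to $\frac{L}{2}\inner{B(y-x)}{y-x}=\frac{L}{2}\normpsq{y-x}$. Hence
\[
g(y)-g(x)-\inner{\nabla g(x)}{y-x} \;=\; \bigl[f(y)-f(x)-\inner{\nabla f(x)}{y-x}\bigr] + \tfrac{L}{2}\normpsq{y-x}.
\]

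From here the two directions are immediate. Adding $\frac{L}{2}\normpsq{y-x}$ to all sides of $-\frac{L}{2}\normpsq{y-x}\le f(y)-f(x)-\inner{\nabla f(x)}{y-x}\le \frac{L}{2}\normpsq{y-x}$ yields, via the identity above, the equivalent two-sided inequality
\[
0 \;\le\; g(y)-g(x)-\inner{\nabla g(x)}{y-x} \;\le\; L\,\normpsq{y-x}.
\]
The left inequality is convexity of $g$, and the right inequality is the descent lemma at smoothness constant $2L$. For convex differentiable functions, this descent-lemma inequality is well known to be equivalent to having a $2L$-Lipschitz gradient (the $\Rightarrow$ half of the classical equivalence is immediate; the $\Leftarrow$ half follows, e.g., from integrating along the segment $x+t(y-x)$), so the pair above is exactly $g\in\mathcal{F}_{0,2L}(\E)$.

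The only real subtlety is ensuring that the $2L$-smoothness of a convex function is fully captured by the single upper inequality $g(y)\le g(x)+\inner{\nabla g(x)}{y-x}+L\normpsq{y-x}$, rather than by the Lipschitz gradient condition appearing in the definition of $\mathcal{F}_{\mu,L}$; this is the standard equivalence recalled above and is the one place I will be explicit. Everything else is the algebraic identity for the quadratic term, which is why the two conditions are literally the same up to a shift by $\frac{L}{2}\normpsq{\cdot-\cdot}$.
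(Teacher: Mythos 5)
Your proof is correct and follows essentially the same route as the paper: define the shifted function $g=f+\frac{L}{2}\normpsq{\cdot}$, use $\nabla g(x)=\nabla f(x)+LBx$ and the quadratic identity to convert the two-sided $L$-smoothness inequality for $f$ into convexity of $g$ together with the upper bound $g(y)\le g(x)+\inner{\nabla g(x)}{y-x}+L\normpsq{y-x}$. The only difference is cosmetic: you spell out the standard equivalence between this descent-lemma inequality and the $2L$-Lipschitz-gradient condition defining $\mathcal{F}_{0,2L}(\E)$, a step the paper invokes implicitly.
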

\begin{proof} Let $f:\fdef$ and define $h(x)=f(x)+\frac{L}{2}\normpsq{x}$. Since we have that $\nabla h(x)=\nabla f(x)+LBx$, it follows that, $\forall x,y\in\E$,
\begin{align*}
f(x)+\inner{\nabla f(x)}{y-x}-f(y) \leq \frac{L}{2}\normpsq{x-y}\Leftrightarrow h(y)\geq h(x)&+\inner{\nabla h(x)}{y-x},\\
-f(x)-\inner{\nabla f(x)}{y-x}+f(y)\leq \frac{L}{2}\normpsq{x-y}\Leftrightarrow 
h(y)\leq h(x)&+\inner{\nabla h(x)}{y-x}\\&+L\normpsq{x-y},
\end{align*}
where the equivalences are obtained by expressing $f$ and $\nabla f$ in terms of $h$ and $\nabla h$ (or reciprocally), which proves our statement.
\end{proof}

From Lemma~\ref{lem:fornoncvxsmoothinterp} and Theorem~\ref{thm:gencvxcomp}, it is now straightforward to establish the desired interpolation conditions.
\begin{theorem}  Let $L\in\R^{++}$, the set $\left\{(x_i,g_i,f_i)\right\}_{i\in I}$ is $\mathcal{F}_{-L,L}$- ($L$-smooth-) interpolable if and only if the following inequality holds $\forall i,j\in I$:
\begin{align*}
f_i\geq f_j-\frac{L}{4}\normpsq{x_i-x_j}{\color{red}\text{\boldmath$-$}}\frac{1}{2}\inner{g_i+g_j}{x_j-x_i}+\frac{1}{4L}\normdsq{g_i-g_j}.
\end{align*}
\label{thm:noncvxinterp}
\end{theorem}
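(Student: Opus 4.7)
The plan is to reduce $\mathcal{F}_{-L,L}$-interpolation to the already handled smooth convex case by using the shift provided in Lemma~\ref{lem:fornoncvxsmoothinterp}. Since $f\in\mathcal{F}_{-L,L}(\E)$ if and only if $h(x):=f(x)+\tfrac{L}{2}\normpsq{x}\in\mathcal{F}_{0,2L}(\E)$, interpolability of $\{(x_i,g_i,f_i)\}_{i\in I}$ by some $f\in\mathcal{F}_{-L,L}(\E)$ is equivalent to the interpolability by some $h\in\mathcal{F}_{0,2L}(\E)$ of the transformed triples
\[
\{(\tilde x_i,\tilde g_i,\tilde f_i)\}_{i\in I}=\bigl\{\bigl(x_i,\,g_i+LBx_i,\,f_i+\tfrac{L}{2}\normpsq{x_i}\bigr)\bigr\}_{i\in I},
\]
because $h$ shares the points of $f$, while its gradient and value at $x_i$ are exactly $\tilde g_i$ and $\tilde f_i$.

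Next, I would invoke Theorem~\ref{thm:gencvxcomp} with $\mu=0$ and $L$ replaced by $2L$: the transformed set is $\mathcal{F}_{0,2L}$-interpolable if and only if
\[
\tilde f_i-\tilde f_j-\inner{\tilde g_j}{x_i-x_j}\geq \frac{1}{4L}\normdsq{\tilde g_i-\tilde g_j}\qquad \forall i,j\in I.
\]
It remains to rewrite this inequality in terms of the original data $(x_i,g_i,f_i)$. For the left-hand side, one uses the identity
\[
\tfrac{L}{2}\normpsq{x_i}-\tfrac{L}{2}\normpsq{x_j}-L\innerE{x_j}{x_i-x_j}=\tfrac{L}{2}\normpsq{x_i-x_j},
\]
which collapses the contribution of the quadratic shift. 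For the right-hand side, expanding $\tilde g_i-\tilde g_j=(g_i-g_j)+LB(x_i-x_j)$ and using self-adjointness of $B$ together with $\normdsq{B(x_i-x_j)}=\normpsq{x_i-x_j}$ gives
\[
\normdsq{\tilde g_i-\tilde g_j}=\normdsq{g_i-g_j}+2L\inner{g_i-g_j}{x_i-x_j}+L^2\normpsq{x_i-x_j}.
\]
After cancellation, the combined term $\inner{g_j}{x_i-x_j}+\tfrac12\inner{g_i-g_j}{x_i-x_j}$ simplifies to $\tfrac12\inner{g_i+g_j}{x_i-x_j}$, and the quadratic terms combine into a single multiple of $\normpsq{x_i-x_j}$, yielding the stated inequality.

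The conceptual work is all contained in Lemma~\ref{lem:fornoncvxsmoothinterp} and Theorem~\ref{thm:gencvxcomp}; the only real obstacle is the bookkeeping in the algebraic reduction, and in particular keeping track of the self-adjoint operator $B$ through the cross-terms $\inner{Bx_j}{x_i-x_j}$ and $\inner{LB(x_i-x_j)}{B^{-1}(g_i-g_j)}$. Since both the substitution and Theorem~\ref{thm:gencvxcomp} provide equivalences (the conditions are necessary \emph{and} sufficient), the resulting inequality is automatically a characterization of $\mathcal{F}_{-L,L}$-interpolability, with no further construction of an interpolating function needed.
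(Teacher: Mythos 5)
Your proposal is exactly the paper's proof: the paper establishes the result through the same reduction, namely the equivalence from Lemma~\ref{lem:fornoncvxsmoothinterp} between $\mathcal{F}_{-L,L}$-interpolability of $\left\{(x_i,g_i,f_i)\right\}_{i\in I}$ and $\mathcal{F}_{0,2L}$-interpolability of $\left\{\left(x_i,\,g_i+LBx_i,\,f_i+\tfrac{L}{2}\normpsq{x_i}\right)\right\}_{i\in I}$, followed by Theorem~\ref{thm:gencvxcomp} with $\mu=0$ and smoothness constant $2L$ and the algebraic rewriting you describe. One remark: your (correct) algebra yields the term $\tfrac12\inner{g_i+g_j}{x_i-x_j}$, whereas the statement as printed has $\tfrac12\inner{g_i+g_j}{x_j-x_i}$; the printed sign is a typo (the printed inequality already fails for $f(x)=\tfrac{L}{2}\normpsq{x}$ with $x_i=0$ and $x_j\neq 0$), so your derived form, not the printed one, is the correct characterization.
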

\begin{proof}
As $L$ is positive and finite, the statement follows from the equivalence\break between $\mathcal{F}_{-L,L}$-interpolability of the set $\left\{\left(x_i,g_i,f_i\right)\right\}_{i\in I}$ and $\mathcal{F}_{0,2L}$-interpolability of the set $\left\{\right(x_i,g_i+LBx_i,f_i+\frac{L}{2}\normpsq{x_i}\left)\right\}_{i\in I}$.
\end{proof}
\section{Algorithm analysis} 
\label{sec:numerics}
In this section, we analytically and numerically study different algorithms for solving variants of~\eqref{eq:origOpt} and compare our results with standard guarantees from the literature.\footnote{Note that most of the literature results are presented when $B$ is the identity operator (and hence $\E=\Es$). We will nevertheless compare our slightly more general results with the standard bounds from the literature (thus even when they are officially valid only for $B$ being the identity)---we recall that our results are valid for any self-adjoint positive definite linear operator $B:\E\rightarrow\Es$ (see Remark~\ref{rem:genB_wc}).} \modAT{We begin with an analytical study of a proximal point algorithm (\secref{ssec:ppa}). This is followed by a comparison between several standard variants of fast proximal gradient methods (\secref{ssec:FPGMs}) using the PEP approach. \del{On the way, we propose an extension of the optimized gradient method (OGM) proposed by Kim and Fessler~\cite{kim2014optimized}.} Finally, we conclude by applying our framework to a conditional gradient method (\secref{ssec:FW_cgm}) and to two alternate projections schemes (\secref{ssec:APM_DAPM}). Those choices illustrate the applicability of the approach for studying a large variety of methods and performance measures.}

\subsection{A proximal point algorithm}\label{ssec:ppa}

Consider a simple model with only one convex (possibly nonsmooth) term in the objective function,
\[\min_{x\in\E} F(x),\]
with $F\in\mathcal{F}_{0,\infty}(\E)$. In this first example, we assume that the following proximal operation is easy to compute for $F$ and defines the next iterate (using a given step size $\alpha_{k+1}$):
\begin{align*}
x_{k+1}&=\prox{\alpha_{k+1} F}{x_{k}}=\argmin{x\in\E}\left\{ \alpha_{k+1} F(x)+\frac{1}{2}\normpsq{x_k-x}\right\}.
\end{align*}
Using an observation made in Section~\ref{sec:lingramrep}, we see that iterations can also be written in the form of an implicit method  $x_{k+1}=x_k-\alpha_{k+1}B^{-1}g_{k+1}$, for some $g_{k+1}\in\partial F(x_{k+1})$, and hence belong to the class~\eqref{eq:gen_alg_model}.

For a recent overview and motivations concerning proximal algorithms, we refer the reader to the work of Combettes and Pesquet\footnote{This work among others features a large list of known proximal operators.}~\cite{combettes2011proximal} and to the review works of Bertsekas~\cite{bertsekas2011incremental} and Parikh and Boyd~\cite{parikh2013proximal}. For a historical point of view on those methods, we refer to the pioneer works of Moreau~\cite{moreau1965proximite} and Rockafellar~\cite{rockafellar1976monotone} and the analysis of G\"uler~\cite{guler1991convergence}.

{\small
\begin{center}
\fbox{
\parbox{0.9\textwidth}{
        \textbf{Proximal point algorithm (PPA)}
 \begin{itemize}
  \item[] Input: $F\in\mathcal{F}_{0,\infty}(\E)$, $x_0\in\E$. Parameters: $\left\{\alpha_k\right\}_{k\ge 1}$ with $\alpha_k > 0$.\\[-0.2cm]
  \item[] For $k=1:N$\\[-0.5cm]
      \begin{align*}
    &x_{k}=\prox{\alpha_k F}{x_{k-1}}
    \end{align*}
  \end{itemize}
   }}
   \end{center}
   }
\subsubsection{Convergence of PPA in function and gradient values} The standard convergence result for the proximal point algorithm is provided by G\"uler in~\cite[Theorem 2.1]{guler1991convergence}:
$$F(x_N)-F_*\leq \frac{R^2}{2\sum_{k=1}^N \alpha_k}$$
for any initial condition $x_0$ satisfying $\normp{x_0-x_*}\leq R$. We are able to divide this bound by $2$ using the PEP approach. 

\begin{theorem}
Let $\left\{\alpha_k\right\}_k$ be a sequence of positive step sizes and $x_0$ some initial iterate satisfying $\normp{x_0-x_*}\leq R$ for some optimal point $x_*$. Any sequence $\left\{x_k\right\}_k$ generated by the proximal point algorithm with step sizes $\left\{\alpha_k\right\}_k$ applied to a function $F\in\mathcal{F}_{0,\infty}(\E)$ satisfies \[F(x_N)-F_*\leq \frac{R^2}{4\sum_{k=1}^N \alpha_k} \]
and this bound cannot be improved, even in dimension one ($\dim \E=\dim \Es=1$).
\label{thm:PPA_conv}
\end{theorem}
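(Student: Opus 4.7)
My plan is to prove the bound in two parts: tightness and upper bound. For tightness, I would exhibit the one-dimensional instance $F(x) = \tfrac{R}{2S}\lvert x\rvert$ with $x_0 = R$. A short induction shows that the iterates satisfy $x_k = R(1 - S_k/(2S)) \geq R/2 > 0$ since $S_k \leq S$, so the constant $g_k = R/(2S) \in \partial F(x_k)$ provides a valid subgradient satisfying the prox optimality condition $\alpha_k g_k = x_{k-1}-x_k$. A direct computation then gives $F(x_N) - F_* = \tfrac{R^2}{4S}$, meeting the claimed upper bound.

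For the upper bound, I would first cast PPA in FSLFOM form (Section~\ref{sec:lingramrep}): each step yields $B(x_{k-1}-x_k) = \alpha_k g_k$ for some $g_k \in \partial F(x_k)$. Writing the convexity inequalities of $F$ at $x_k$ with reference points $x_*$ and $x_{k-1}$, and eliminating $g_k$ via the iteration, yields the two fundamental bounds
\begin{align*}
2\alpha_k(f_k - f_*) + \normpsq{x_k - x_*} + \normpsq{x_{k-1}-x_k} &\leq \normpsq{x_{k-1}-x_*},\\
\alpha_k(f_{k-1}-f_k) &\geq \normpsq{x_{k-1}-x_k}.
\end{align*}
A straightforward telescoping of the first (combined with the second, which implies $f_{k-1} \geq f_k \geq f_N$) already reproduces Guler's weaker $R^2/(2S)$ bound.

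The factor-of-two improvement requires invoking the PEP framework of Proposition~\ref{thm:sdp_pep}: the worst-case value coincides with the optimum of a convex SDP, and a dual feasible solution provides an analytical certificate. Concretely, I would seek non-negative multipliers $\lambda_{ij}$ on the interpolation inequalities $f_i - f_j - \langle g_j, x_i - x_j\rangle \geq 0$ (for $i,j \in I$ with $g_j$ defined) together with coefficient $d = 1/(4S)$ on the initial condition $\normpsq{x_0-x_*} \leq R^2$, such that after substituting the iteration the weighted sum collapses to $\tfrac{R^2}{4S} - (f_N - f_*) \geq Q$, where $Q$ is a non-negative quadratic form in the iterates (a sum of squared $\normp{\cdot}$-norms) vanishing along the tight trajectory. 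The main obstacle is identifying the correct multipliers and verifying $Q\geq 0$ analytically for arbitrary step sizes. For $N=1$ the certificate reduces to the elementary identity $\normpsq{x_0-x_*} \leq 2\normpsq{x_0-x_1} + 2\normpsq{x_1-x_*}$, giving $Q = \tfrac{1}{4\alpha_1}\normpsq{x_0+x_*-2x_1}$; for $N\geq 2$, guided by the tight example (where every pairwise convexity inequality among iterates becomes active), I would expect multipliers supported on the consecutive pairs $(k-1,k)$ and $(k,k-1)$ together with the terminal $(*,N)$, with the precise coefficients and the SOS decomposition of $Q$ dictated by the step-size sequence $\{\alpha_k\}$.
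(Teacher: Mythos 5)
Your tightness argument is fine and is essentially the paper's: the paper uses the same one-dimensional $\ell_1$-shaped instance $F(x)=\frac{\sqrt{B}R\lvert x\rvert}{2\sum_k\alpha_k}$ (you take $B$ the identity and start from $+R$ instead of $-R$), and your verification that the iterates stay on one linear piece and end at distance $R/2$ from the minimizer is correct. The preliminary inequalities you derive (the two "fundamental bounds" and the telescoping giving Guler's $R^2/(2\sum_k\alpha_k)$) are also correct, but they only recover the known bound, as you note.

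The genuine gap is the upper bound for general $N$, which is the actual content of the theorem. Your proposal stops exactly where the difficulty begins: you state that one should find nonnegative multipliers on the interpolation inequalities plus weight $\tau=\frac{1}{4\sum_k\alpha_k}$ on $\normpsq{x_0-x_*}\leq R^2$ so that the weighted sum collapses to $\frac{R^2}{4\sum_k\alpha_k}-(f_N-f_*)\geq Q$ with $Q\succeq 0$, you verify this only for $N=1$ (correctly — your $N=1$ certificate coincides with the paper's), and you acknowledge that identifying the multipliers and proving $Q\geq 0$ for arbitrary $\{\alpha_k\}$ and $N\geq 2$ is "the main obstacle." That obstacle is precisely what the paper's Appendix resolves: it exhibits the explicit dual solution $\lambda_{i,i+1}=\frac{\sum_{k=1}^i\alpha_k}{2\sum_{k=1}^N\alpha_k-\sum_{k=1}^i\alpha_k}$ ($1\leq i\leq N-1$), $\lambda_{*,i}=\frac{2\alpha_i\sum_{k=1}^N\alpha_k}{\left(2\sum_{k=1}^N\alpha_k-\sum_{k=1}^i\alpha_k\right)\left(2\sum_{k=1}^N\alpha_k-\sum_{k=1}^{i-1}\alpha_k\right)}$ ($1\leq i\leq N$), $\tau=\frac{1}{4\sum_{k=1}^N\alpha_k}$, checks the flow (equality) constraints, and proves positive semidefiniteness of the dual slack matrix by a Schur complement with respect to the $\tau$-entry followed by a diagonal-dominance argument (nonpositive off-diagonal entries, zero row sums). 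Without such an explicit certificate and PSD verification, the factor-of-two improvement is not established. Note also that your conjectured support is off: you anticipate consecutive pairs in both directions plus only the terminal pair $(*,N)$, whereas the paper's certificate uses the one-directional consecutive multipliers $\lambda_{i,i+1}$ together with a strictly positive weight $\lambda_{*,i}$ linking the optimum to \emph{every} iterate $i$ (since at the worst-case instance essentially all interpolation inequalities are active, complementary slackness does not single out your support, and it is not clear a certificate of your proposed form exists). So the road map is reasonable, but the decisive step — the general-$N$ dual certificate and its positive semidefiniteness — is missing.
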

\begin{proof}
We first prove that the bound is tight. For given $N$, $R$ and step sizes $\{ \alpha_k \}_{1 \le k \le N}$, we consider the $l_1$-shaped one-dimensional function \[ F(x)=\frac{\sqrt{B}R\lvert x\rvert}{2\sum_{k=1}^N \alpha_k}=\frac{R\normp{x}}{2\sum_{k=1}^N \alpha_k}\;,\] for which $x_* = 0$ and $F_*=0$. Applying $N$ iterations of PPA with step sizes $\{ \alpha_k \}_{1 \le k \le N}$ to this one-dimensional function, starting from $x_0=-\frac{R}{\sqrt{B}}$ (which satisfies $\normp{x_0-x_*}\leq R$), leads to a sequence whose last iterate satisfies \[ F(x_N)-F_*=\frac{R^2}{4\sum_{k=1}^N\alpha_k} \;. \] 
\noindent Indeed, note that for $x\neq 0$, we have $\nabla F(x)=\textrm{sign}(x) \frac{\sqrt{B}R}{2\sum_{k=1}^N\alpha_k}$. Hence, \[x_N=x_0+B^{-1}\sum_{k=1}^N \alpha_k \frac{\sqrt{B}R}{2\sum_{k=1}^N\alpha_k}=-\frac{R}{2\sqrt{B}} \] which implies the desired result.

\modAT{The proof of the upper bound is based on considering a simplified formulation of~\eqref{Intro:dPEP} for the proximal point algorithm, computing its dual and exhibiting a feasible solution to that dual. Because it is a little longer it is relegated to Appendix~\ref{sec:app_proof_thm_ppa1}.}
\end{proof}

\modAT{Let us consider another convergence measure based on the residual subgradient norm. Studying a PEP similar to the one above, we obtained strong numerical evidence for the following conjecture.}
\begin{conjecture}
Let $\left\{\alpha_k\right\}_k$ be a sequence of positive step sizes and $x_0$ some initial iterate satisfying $\normp{x_0-x_*}\leq R$ for some optimal point $x_*$. For any sequence $\left\{x_k\right\}_k$ generated by the proximal point algorithm with step sizes $\left\{\alpha_k\right\}_k$ on a function $F\in\mathcal{F}_{0,\infty}(\E)$, there exists for every iterate $x_N$ \revv{a subgradient} $\rev{g_N}\in\partial F(x_N)$  \revv{such that} \rev{\[ \normd{g_N}\leq \frac{R}{\sum_{k=1}^N \alpha_k}.\]}In particular, the choice \rev{$g_N=\frac{Bx_{N-1}-Bx_N}{\alpha_N}$ \revv{is a subgradient satisfying the inequality}}. 
\label{conj:PPA_conv_grad}
\end{conjecture}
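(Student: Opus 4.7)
The plan is to mirror the PEP-based approach used to establish Theorem~\ref{thm:PPA_conv}. By Proposition~\ref{thm:sdp_pep} and Corollary~\ref{cor:Large_scale_FSLFOM}, the worst-case value of $\normdsq{g_N}$ over trajectories of PPA applied to $F\in\mathcal{F}_{0,\infty}(\E)$ satisfying $\normp{x_0-x_*}\leq R$, with $g_N=B(x_{N-1}-x_N)/\alpha_N$, can be written as a convex SDP in the Gram matrix $G_N$ and value vector $F_N$. The constraints are the $\mathcal{F}_{0,\infty}$-interpolation inequalities $f_i-f_j-\inner{g_j}{x_i-x_j}\geq 0$ for all $i,j\in\{0,1,\ldots,N,*\}$, the iteration identities $\alpha_k g_k = B(x_{k-1}-x_k)$ for $1\leq k\leq N$, the optimality condition $g_*=0$, and $\normpsq{x_0-x_*}\leq R^2$. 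Any feasible point of the associated dual SDP then yields an upper bound on the worst-case value of $\normdsq{g_N}$.

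The goal is to exhibit a dual feasible certificate of objective value $R^2/(\sum_{k=1}^N\alpha_k)^2$. The candidate extremal instance $F(x)=M\lvert x\rvert$ with $M=R/\sum_k\alpha_k$ and $x_0$ chosen so that the trajectory reaches $x_N=0=x_*$ attains the conjectured value; on this trajectory, all pairwise convexity inequalities among indices $\{0,1,\ldots,N\}$ (in both directions) and all inequalities from $x_*$ toward these indices are tight. This active set suggests placing dual multipliers on exactly these pairs, with weights proportional to the step sizes $\alpha_k$. I would solve the SDP numerically for small $N$, read off the optimal multipliers as functions of the $\alpha_k$'s, formulate a closed-form ansatz, and then verify feasibility and the matching objective value by direct algebra.

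A fully direct attempt would proceed as follows. Cauchy-Schwarz and the initial condition yield $\innerS{g_N}{x_0-x_*}\leq\normd{g_N}\normp{x_0-x_*}\leq R\normd{g_N}$, so the conjecture follows from the key inequality $\innerS{g_N}{x_0-x_*}\geq\bigl(\sum_{k=1}^N\alpha_k\bigr)\normdsq{g_N}$. Monotonicity of $\partial F$ combined with $g_*=0$ gives $\innerS{g_N}{x_N-x_*}\geq 0$, while the telescoping identity $B(x_0-x_N)=\sum_k\alpha_k g_k$ yields $\innerS{g_N}{x_0-x_N}=\sum_k\alpha_k\innerEs{g_N}{g_k}$. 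The proof therefore reduces to establishing $\sum_{k=1}^N\alpha_k\innerEs{g_N}{g_k-g_N}\geq 0$.

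The main obstacle lies in this last reduction. Pairwise monotonicity between consecutive iterates delivers only $\innerEs{g_{k-1}}{g_k}\geq\normdsq{g_k}$, whence $\normd{g_k}$ is non-increasing; combined with firm nonexpansiveness $\normpsq{x_0-x_*}-\normpsq{x_N-x_*}\geq\sum_k\normpsq{x_{k-1}-x_k}$, this only yields the weaker bound $\normd{g_N}\leq R/\sqrt{\sum_k\alpha_k^2}$, short of the conjectured rate by a factor as large as $\sqrt{N}$. Closing the gap requires aggregating the monotonicity inequalities between non-consecutive iterates $\innerS{g_k-g_l}{x_k-x_l}\geq 0$, possibly together with the convexity inequalities at $x_*$, with carefully chosen weights. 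Identifying this weighting analytically --- which by LP duality corresponds exactly to the dual SDP certificate above --- is the principal difficulty, and explains why the statement is currently left as a conjecture rather than a theorem.
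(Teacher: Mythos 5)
Your proposal does not (and candidly does not claim to) prove the statement, and it is worth stressing that the paper does not prove it either: the bound is stated only as a conjecture, supported by numerical solutions of the corresponding PEP, and the only rigorous ingredient supplied is the matching lower bound via the one-dimensional $l_1$-shaped instance $F(x)=\sqrt{B}R\lvert x\rvert/\sum_{k}\alpha_k$ started at $x_0=-R/\sqrt{B}$ (which is exactly the extremal instance you identify, with the prox steps sliding the iterate from $-R/\sqrt{B}$ to $x_N=0=x_*$ and $g_N$ of norm $R/\sum_k\alpha_k$). Your setup of the simplified SDP, your reduction via Cauchy--Schwarz, the monotonicity inequality $\innerS{g_N}{x_N-x_*}\geq 0$, and the telescoping identity $B(x_0-x_N)=\sum_k\alpha_k g_k$ to the key inequality $\sum_{k=1}^N\alpha_k\innerEs{g_N}{g_k-g_N}\geq 0$ are all sound, and your observation that consecutive-iterate monotonicity plus firm nonexpansiveness only yields $\normd{g_N}\leq R/\sqrt{\sum_k\alpha_k^2}$ correctly locates where the difficulty sits.

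The genuine gap is the one you name: neither the aggregated inequality $\sum_k\alpha_k\innerEs{g_N}{g_k-g_N}\geq 0$ nor a closed-form dual certificate for the residual-norm PEP is actually established; the numerically-guided ansatz is left unexecuted, so the upper bound remains unproven. Be aware also that pinning down the correct active set is more delicate than your sketch suggests: on the $l_1$ worst-case instance the interpolation inequalities written with the subgradient $g_*=0$ (i.e., $f_i-f_*-\innerS{g_*}{x_i-x_*}\geq 0$) are strict, and only those evaluated with the iterates' subgradients against $x_*$ are tight, so the multiplier pattern cannot simply mirror the one used in Appendix~\ref{sec:app_proof_thm_ppa1} for the function-value criterion. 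In short, your write-up is an accurate account of the state of the problem --- including a correct tightness argument matching the paper's --- but, like the paper, it stops short of a proof of the claimed bound, which is precisely why the statement is labelled a conjecture.
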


\revv{Observe that this bound cannot be improved, as it is attained on the (one-dimensional) $l_1$-shaped function $F(x)=\frac{\sqrt{B}R\lvert x\rvert}{\sum_{k=1}^N \alpha_k}$ started from  $x_0=-R/\sqrt{B}$. The particular choice of subgradient suggested in the theorem corresponds to the subgradient appearing in the proximal operation when written as an implicit subgradient step.}

 This sort of convergence results in terms of the residual (sub)gradient norm is particularly interesting when considering dual methods. In that case, the dual residual gradient norm corresponds to the primal distance to feasibility (see, e.g.,~\cite{devolder2012double}).
 
\subsection{Fast gradient methods}\label{ssec:FPGMs}

In this section, we consider the two-term composite objective function
\begin{equation}
\min_{x\in\E} \left\{F(x)\equiv F^{(1)}(x)+F^{(2)}(x)\right\}\label{eq:comp_regsmooth}
\end{equation} 
with $F^{(1)}\in\mathcal{F}_{0,L}(\E)$ (smooth convex function) and $F^{(2)}\in\mathcal{F}_{0,\infty}(\E)$ (nonsmooth convex function). We assume that gradients are easy to compute for $F^{(1)}$ and that the proximal operation is easy to compute for $F^{(2)}$:
\begin{align*}
\prox{\alpha F^{(2)}}{x}=\argmin{y\in\E}\left\{ \alpha F^{(2)}(y)+\frac{1}{2}\normpsq{x-y}\right\}.
\end{align*}
In order to approximatively solve~\eqref{eq:comp_regsmooth}, it is common to use different variants of fast proximal gradient methods (FPGM). We numerically investigate the worst-case guarantees of two variants using different step size policies and propose new variants with slightly better worst-case behaviors. Also, we highlight differences in the worst-case performances obtained in the cases where $F^{(2)}=0$ (unconstrained smooth convex minimization), $F^{(2)}\in\mathcal{I}_{\pinf}(\E)$ (constrained smooth convex minimization), and the general $F^{(2)}\in\mathcal{F}_{0,\pinf}(\E)$ (nonsmooth composite convex minimization).

\rev{In the following, we call the standard fast proximal gradient method FPGM1 (FISTA~\cite{beck2009fast}) and introduce FPGM2, a variant with slightly better guarantees, and POGM, a novel proximal version of the optimized gradient method~\cite{kim2014optimized}. FPGM2 and POGM illustrate how PEPs can be used in the development of new optimization algorithms; their study in this paper remains, however, entirely numerical.}

\subsubsection{Standard fast proximal gradient methods (FPMG1)}  The first variants of accelerated proximal methods we are considering use a standard proximal step after an explicit gradient step for generating the so-called {primary sequence}~$\left\{y_k\right\}_k$.
\vspace{.05cm}
\begin{center}
{\small \fbox{
\parbox{0.9\textwidth}{
        \textbf{Fast proximal gradient method (FPGM1)}
  \begin{itemize}
  \item[] Input: $F^{(1)}\in\mathcal{F}_{0,L}(\E)$, $F^{(2)}\in\mathcal{F}_{0,\infty}(\E)$ $x_0\in\E$, $y_0=x_0$.\\[-0.2cm]
  \item[] For $k=1:N$\\[-0.5cm]
      \begin{align*}
    &y_{k}=\prox{F^{(2)}/L}{x_{k-1}-\frac{1}{L}B^{-1}\nabla F^{(1)}(x_{k-1})}\\
    &x_{k}=y_{k}+\alpha_{k} (y_{k}-y_{k-1})
    \end{align*}
  \end{itemize}
       }}}
\end{center}
\vspace{.3cm}

In this algorithm, we refer to coefficients $\alpha_k$ as \emph{inertial parameters}. We use two standard variants: $\alpha^{(a)}_{k}=\frac{k-1}{k+2}$---among others proposed in~\cite{su2014differential,tseng2008accelerated}---and $\alpha^{(b)}_{k}=\frac{\theta_{k-1}-1}{\theta_{k}}$, with \[\theta_{k}=\frac{1+\sqrt{4\theta_{k-1}^2+1}}{2}\] and $\theta_0=1$--- see~\cite{beck2009fast,Nesterov:1983wy,tseng2008accelerated}. For both variants, the standard convergence result is (see, e.g.,~\cite{beck2009fast,su2014differential})
\begin{align}
F(y_N)-F_*\leq \frac{\rev{2LR^2}}{(N+1)^2}\label{eq:th_FGM}
\end{align}
\rev{for any initial iterate $x_0$ such that $\normp{x_0-x_*}\leq R$.}
We numerically compare those two variants of FPGM1 using~\eqref{Intro:dPEP} in \figref{Fig:FGMS_comp} (left plot). After $100$ iterations, both inertial parameter policies behave about the same way (parameters $\alpha^{(b)}_k$ perform only about $2\%$ better than $\alpha^{(a)}_k$ in terms of worst-case performances). We also observe that the behavior of both variants of FPGM1 is well captured by the standard guarantee~\eqref{eq:th_FGM}.

\begin{figure}[!ht]
\begin{center}
\subfigure{\includegraphics[scale=0.3]{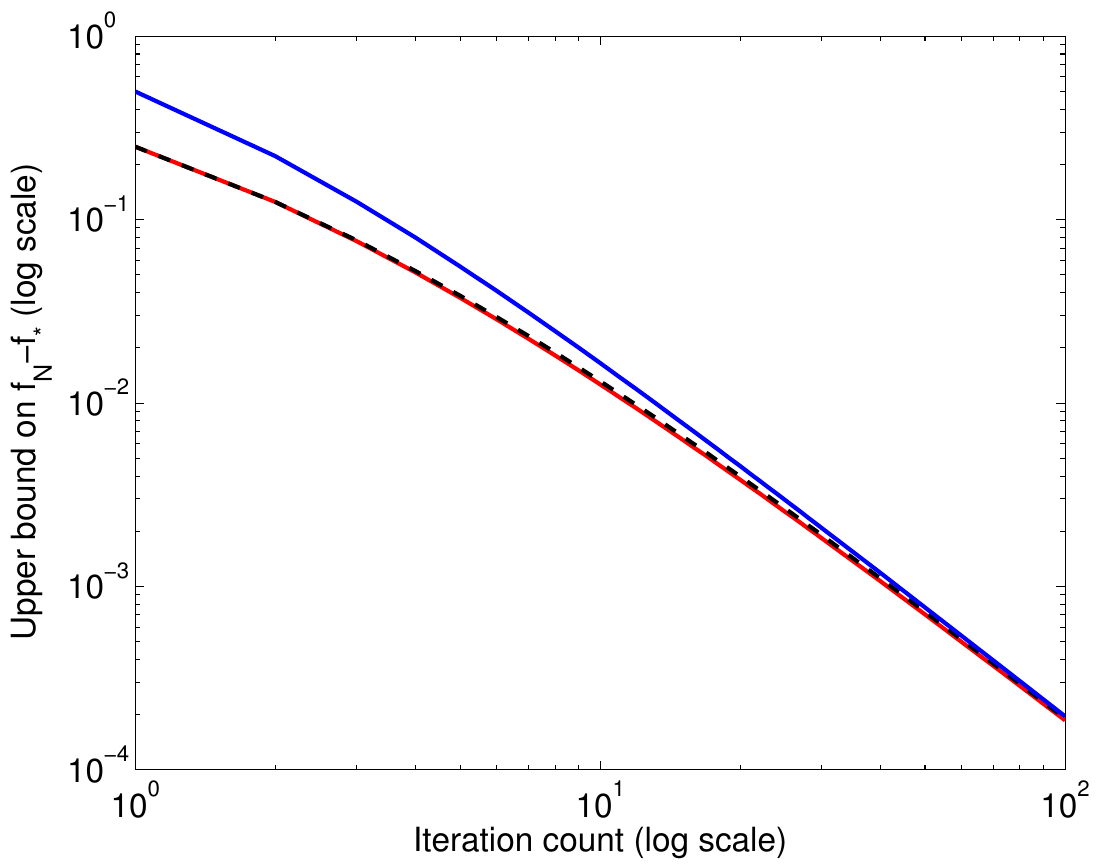}}
\hspace{.1cm}
\subfigure{\includegraphics[scale=0.3]{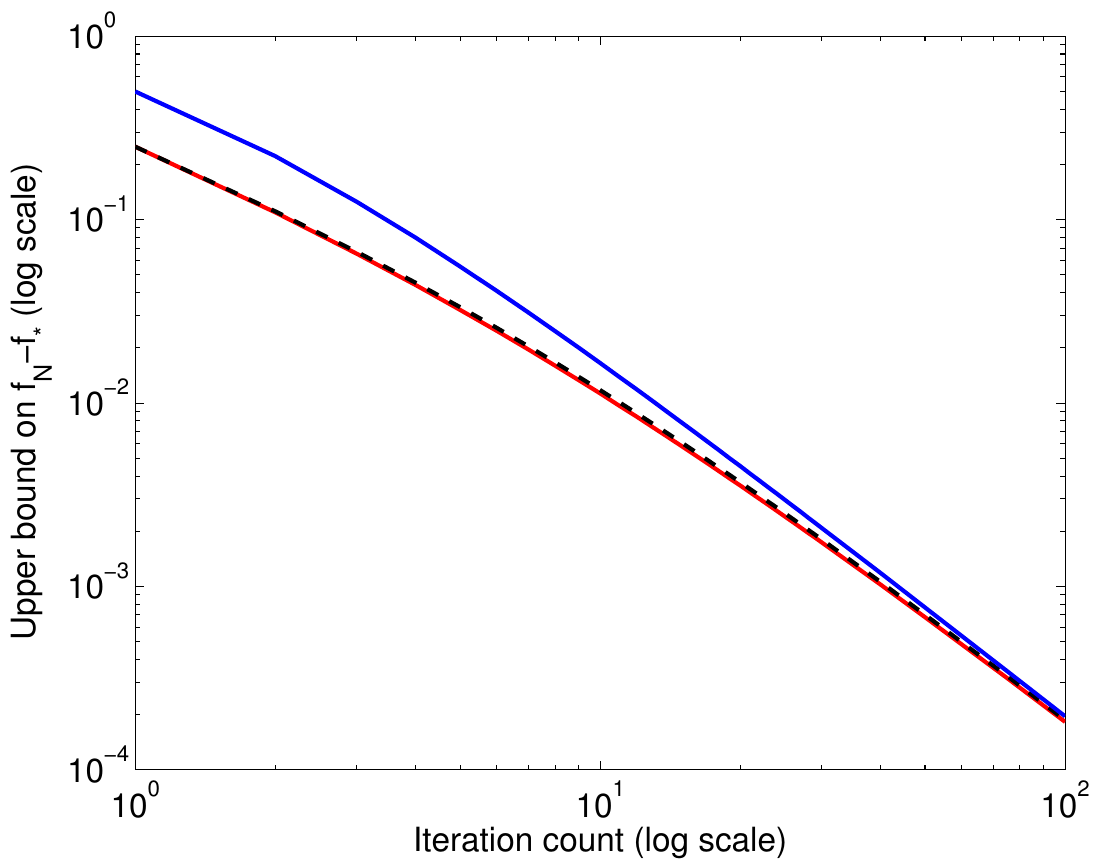}}
\label{Fig:FGMS_comp}
\caption{Comparison of the worst-case convergence speed of the different variants of FPGM1 (left) and FPGM2 (right) \rev{for $N\in\{1,\hdots, 100\}$, $L=1$, and $R=1$}. Curves correspond to the different inertial coefficient, namely, $\alpha^{(a)}_{k}$ (dashed, black) and $\alpha_{k}^{(b)}$ (red), and to the standard guarantee~\eqref{eq:th_FGM} (blue).}
\end{center}
\end{figure}

\subsubsection{New fast proximal gradient methods (FPGM2)} Secondary sequences $\left\{x_k\right\}$ are usually converging slightly faster than primary sequences $\left\{y_k\right\}$ in the unconstrained case ($F^{(2)}=0$), as observed in~\cite{kim2014optimized,taylor2015smooth}. However, some issues may arise with the secondary sequences of FPGM1 when applied to constrained or proximal problems: iterates may in some cases become infeasible, or the objective may become unbounded (see Table~\ref{Tab:PFGM_Conj}). We therefore propose a new variant of a fast proximal gradient method called FPGM2, also with two different step size policies, that does not suffer from theses drawbacks.
\revv{Part of the underlying motivation behind FPGM2 is also the ability to generalize it later to the optimized gradient method.}

\rev{\begin{remark}\label{rem:design} The design of FPGM2 is based on two ideas: on the one hand, it should be equivalent to the standard fast gradient method in the case of smooth unconstrained convex minimization, and on the other hand, it should not move after two consecutive iterates have reached the same optimal point for~\eqref{eq:comp_regsmooth} (i.e., $x_{k-1}=x_{k}=x_*$ implies $x_{k+1}=x_*$). 
\end{remark}}
\vspace{.05cm}
\begin{center}
{\small \fbox{
\parbox{0.9\textwidth}{
        \textbf{Fast Proximal Gradient Method 2 (FPGM2)}
  \begin{itemize}
  \item[] Input: $F^{(1)}\in\mathcal{F}_{0,L}(\E)$, $F^{(2)}\in\mathcal{F}_{0,\infty}(\E)$ $x_0\in\E$, $z_0=y_0=x_0$.\\[-0.2cm]
  \item[] For $k=1:N$\\[-0.5cm]
      \begin{align*}
    &y_{k}=x_{k-1}-\frac{1}{L}B^{-1}\nabla F^{(1)}(x_{k-1})\\
    &z_{k}=y_{k}+\alpha_{k} (y_{k}-y_{k-1})+\frac{\alpha_{k}}{L\gamma_{k-1}}(z_{k-1}-x_{k-1})\\
    &x_{k}=\prox{\gamma_{k} F^{(2)}}{z_{k}}
    \end{align*}
  \end{itemize}
       }}}
\end{center}
\vspace{.3cm}

In this algorithm, we use the coefficients $\gamma_{k}=\frac{\alpha_{k}+1}{L}$. Note that we introduced two intermediate sequences: on the one hand sequence $\left\{\gamma_{k}\right\}_k$, corresponding to the step sizes to be taken by the proximal steps, and on the other hand sequence $\left\{z_{k}\right\}_k$, which keeps track of the subgradient used in the proximal steps (note that $\frac{1}{\gamma_k}(z_k-x_k)$ corresponds to the subgradient used in the proximal step from $z_k$ to $x_k$). Although FPGM2 may look more intricate than the classical FPGM1, it is in fact simpler, as it involves only one sequence on which both implicit (proximal) and explicit (gradient) steps are being taken. Indeed, explicit steps are taken using gradient values of $F^{(1)}$ at $x_k$, and subgradients used in the proximal steps are subgradients of $F^{(2)}$ also at $x_k$. \revv{ This can also be seen by rewriting the iterations of FPGM2 using the secondary sequence $\left\{x_k\right\}_k$ only, in the following way:
\begin{align*}
x_{k+1}= x_k& + \alpha_{k+1} (x_k-x_{k-1}) \\&+\frac{\alpha_{k+1}}{L}B^{-1} \nabla F^{(1)}(x_{k-1}) -\frac{1}{L}B^{-1}\nabla F^{(1)}(x_k)-\frac{\alpha_{k+1}}{L}B^{-1} \nabla F^{(1)}(x_k) \\&+\frac{\alpha_{k+1}}{L}B^{-1}\tilde{\nabla}F^{(2)}(x_{k})-\frac{1}{L}B^{-1}\tilde{\nabla}F^{(2)}(x_{k+1})-\frac{\alpha_{k+1}}{L}B^{-1}\tilde{\nabla}F^{(2)}(x_{k+1}) ,
\end{align*}
with $\tilde{\nabla}F^{(2)}(x_{k})$ the subgradient of $F^{(2)}$ used in the proximal operation generating $x_k$.}

Comparing the different variants of \rev{FPGM2} on \figref{Fig:FGMS_comp} (right plot) leads to the same conclusion as for FPGM1: inertial parameters $\alpha^{(b)}$ perform slightly better than~$\alpha^{(a)}$.  

In Table~\ref{Tab:PFGM_Conj}, we report the different worst-case performance guarantees obtained  numerically for FPGM1 (for both sequences) and FPGM2 (for the better secondary sequence only). We consider three situations: $F^{(2)}=0$ (unconstrained smooth convex minimization), $F^{(2)}\in\mathcal{I}_{\pinf}(\E)$ (constrained smooth convex minimization with projected methods), and $F^{(2)}\in\mathcal{F}_{0,\pinf}(\E)$ (nonsmooth composite convex minimization with proximal methods). 

\begin{table}[ht!]
{
\begin{center}
{\renewcommand{\arraystretch}{1.2}
\begin{tabular}{@{}llll@{}}
\specialrule{2pt}{1pt}{1pt}
Type $\quad$ & \begin{tabular}{@{}c@{}}$F(y_N)-F_*$ \\  (FPGM1)\end{tabular} & \begin{tabular}{@{}c@{}}$F(x_N)-F_*$ \\  (FPGM1)\end{tabular}  & \begin{tabular}{@{}c@{}}$F(x_N)-F_*$ \\  (FPGM2)\end{tabular}  \\
\hline
Unconstrained & \multirow{2}{.2\linewidth}{$\frac{LR^2}{2} \frac{4}{N^2+5N +6}$} & \multirow{2}{.2\linewidth}{$\frac{LR^2}{2} \frac{4}{N^2+7N +4}$} & \multirow{2}{.2\linewidth}{$\frac{LR^2}{2} \frac{4}{N^2+7N +4}$}\\
 ($F^{(2)}=0$)& & & \\[0.15cm]
Constrained & \multirow{2}{.2\linewidth}{$\frac{LR^2}{2} \frac{4}{N^2+5N +2}$} & \multirow{2}{.2\linewidth}{\rev{Infeasible}} & \multirow{2}{.2\linewidth}{$\frac{LR^2}{2} \frac{4}{N^2+7N}$} \\
 ($F^{(2)}\in \mathcal{I}_\infty$)& & & \\[0.15cm]
Non-smooth & \multirow{2}{.2\linewidth}{$\frac{LR^2}{2} \frac{4}{N^2+5N +2}$} & \multirow{2}{.2\linewidth}{\rev{Unbounded}} & \multirow{2}{.2\linewidth}{$\frac{LR^2}{2} \frac{4}{N^2+7N}$}\\
($F^{(2)}\in \mathcal{F}_{0,\infty}$)& & & \\
\specialrule{2pt}{1pt}{1pt}
\end{tabular}
\caption{Worst-case obtained for \rev{FPGM1 and FPGM2} with inertial coefficient $\alpha_k=\frac{k-1}{k+2}$ and $N\geq 1$. }
\label{Tab:PFGM_Conj}}
\end{center}}
\end{table}

All finite convergence results reported in the table actually correspond to specific worst-case functions that we could identify numerically, which means that they provide rigorous lower bounds. After solving the corresponding PEPs numerically (for $L=R=1$ and $1 \le N \le 100$),  we conjecture them to be equal to the exact worst-case guarantees.

We observe that the worst-case guarantees for FPGM2 are slightly better than for FPGM1. 
Guarantees for the unconstrained case are slightly better than those for the constrained and proximal cases, which are equal. Note that the secondary sequence of FPGM1 is not guaranteed to be feasible in the constrained case, and that the corresponding objective value may be unbounded in the proximal case (for any $N \ge 1$).

The worst-case functions identified numerically for the unconstrained case are Huber-shaped functions~\cite{taylor2015smooth}. In the constrained case, we identified one-dimensional linear optimization problems of the form $\min_{x\geq 0} {c}{x}$ as worst-cases, where $c$ is a constant defined by \[ c=\frac{\sqrt{B}R}{2\sum_{j=0}^{N-1}h_{N,j}^{(1)}}\] where $\{h_{N,j}^{(1)}\}$ correspond to the step sizes used in FPGM according to the notation
introduced in (FSLFOM), under the particular choice of $t_{N,N}=1$, and $t_{N,j}=0$ for $0\leq j\leq N-1$). Finally, for the proximal case, our worst-case has function $F^{(1)}(x) = c x$ with the same $c$ as above, and function $F^{(2)}(x)$ may be chosen equal to zero for $x \ge 0$ and to $s x$ for $x < 0$, for any negative value of the slope $s < 0$.

{\subsection{A proximal optimized gradient method} In this section, we consider again the nonsmooth composite convex minimization problem~\eqref{eq:comp_regsmooth}.
In particular, we investigate the possibility of obtaining an optimized method for this setting (i.e., a method whose worst-case performance is the best possible).

Our proposal consists in extending the optimized gradient method (OGM) developed by Kim and Fessler in~\cite{kim2014optimized}, which was originally tailored for smooth unconstrained minimization ($F^{(2)}=0$). In the unconstrained smooth minimization setting, this first-order method \modAT{was recently shown in~\cite{drori2016exact} to have the best achievable worst-case guarantee for the criterion $F_N-F_*$}.

The new method we propose, called POGM, has been obtained by combining ideas obtained from the original OGM~\cite{kim2014optimized} and the nonstandard placement of the proximal operator used for speeding up the convergence of fast proximal gradient methods (FPGM2). It was designed using the same two principles as FPGM2 (see Remark~\ref{rem:design}): \revv{on the one hand, it is equivalent to OGM when applied to smooth unconstrained convex minimization problems, and on the other hand, it remains at an optimal point when it reaches one.}

\vspace{.05cm}
{\small \begin{center}
\fbox{
\parbox{0.9\textwidth}{
        \textbf{Proximal optimized gradient method (POGM)}
  \begin{itemize}
  \item[] Input: $F^{(1)}\in\mathcal{F}_{0,L}(\E)$, $F^{(2)}\in\mathcal{F}_{0,\pinf}(\E)$, $x_0\in\E$, $y_0=x_0$, $\theta_0=1$.\\[-0.2cm]
  \item[] For $k=1:N$\\[-0.5cm]
      \begin{align*}
    &y_{k}=x_{k-1}-\frac{1}{L}B^{-1}\nabla F^{(1)}(x_{k-1}) \\
    &z_{k}=y_{k}+\frac{\theta_{k-1}-1}{\theta_{k}} (y_{k}-y_{k-1})+\frac{\theta_{k-1}}{\theta_{k}} (y_{k}-x_{k-1})+\frac{\theta_{k-1}-1}{L\gamma_{k-1}\theta_{k}}(z_{k-1}-x_{k-1})\\
    &x_{k}=\prox{\gamma_{k}F^{(2)}}{z_{k}}
    \end{align*}
  \end{itemize}
   }}
   \end{center}}  \vspace{.3cm}

In this algorithm, we use the sequence $\gamma_{k}=\frac 1 L \frac{2\theta_{k-1}+\theta_{k}-1}{\theta_{k}}$ and the inertial coefficients proposed in~\cite{kim2014optimized}:
   $$    \theta_{k}=\left\{
    \begin{array}{ll}
    \frac{1+\sqrt{4\theta_{k-1}^2 +1}}{2}, & i\leq N-1,\\
    \frac{1+\sqrt{8\theta_{k-1}^2 +1}}{2}, & i=N.
    \end{array}
    \right.$$
Simply trying to generalize OGM using the standard proximal step on the primary sequence $\left\{y_i\right\}$ (as for FPGM1) does not lead to a converging algorithm. We obtained numerical evidence, i.e., worst-case functions showing that the worst-case bound for this candidate algorithm does not decrease after each iteration (in other words, its worst-case rate is not converging to zero). Therefore we have to introduce the same idea used in FPGM2 concerning the place of the proximal operator. 

We compare POGM to FPGM with inertial coefficients $\alpha^{(b)}_k$  in~\figref{Fig:FISTAvsPOGM}. We obtain worst-case performances about twice better for POGM when compared to both FPGM1 and \rev{FPGM2} between $1$ and $100$ iterations. \rev{Also, we observe that the bound for POGM (equivalent to OGM when $F^{(2)}=0$) is approximately $12\%$ worse than that for OGM~\cite{kim2014optimized} in the worst-case.}

Of course, POGM suffers from the drawback of requiring the knowledge of the number of iterations in advance (because the rule to compute the last coefficient $\theta_{N}$ differs from the rule to compute all the previous ones). This practical disadvantage is not easily solved: if the last $\theta_N$ is updated with the same rule as all the previous coefficients,  performance  is degraded  by a nonnegligible factor, rendering it even slower than FPGM (note that this is already the case for smooth unconstrained minimization~\cite{kim2015convergence}).
\begin{figure}[!ht]
\begin{center}
\includegraphics[scale=0.4]{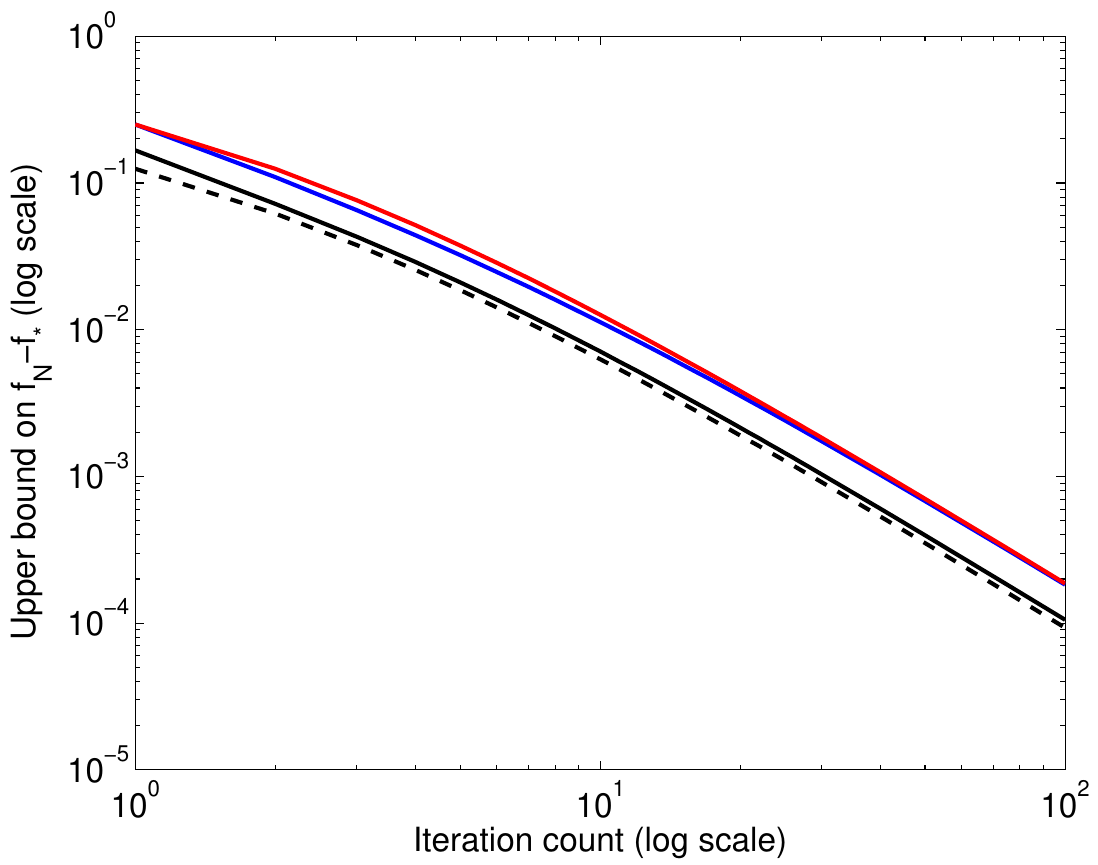}
\caption{Comparison between the worst-case performances of FPGM1 (with inertial coefficients $\alpha^{(b)}_k$) (red), FPGM2 (with inertial coefficients $\alpha^{(b)}_k$) (blue)\rev{, POGM (black), and OGM (dashed, black)} \rev{for $N\in\{1,\hdots, 100\}$, $L=1$, and $R=1$}.}
\label{Fig:FISTAvsPOGM}
\end{center}
\end{figure}}

\subsection{\rev{A conditional gradient method}} \label{ssec:FW_cgm} Consider the constrained smooth convex optimization problem \[
\min_{x\in Q} F(x),\]
with $F\in\mathcal{F}_{0,L}(\E)$ and $Q\subset \E$ a bounded and closed convex set. In that setting, different ways exist for treating the constraint set $Q$. In the previous section, we proposed to use fast gradient methods, which require the ability to project onto the closed convex set $Q$. In this section, we rather consider the standard CGM \rev{(also sometimes referred to as the Frank--Wolfe method)}, which originates from~\cite{frank1956algorithm}. This algorithm has the advantage of avoiding projections onto~$Q$ and performs instead linear optimization on this set (which is typically easier when $Q$ is a polyhedral set).
{\small \begin{center}

\vspace{.05cm}
\fbox{
\parbox{0.9\textwidth}{
        \textbf{Conditional gradient method (CGM)}
  \begin{itemize}
  \item[] Input: $F\in\mathcal{F}_{0,L}(\E)$, closed convex $Q\subset  \E$ with $\normp{x-y}\leq D\ \forall x,y\in Q$, $x_0\in Q$. \\[-0.2cm]
  \item[] For $k=1:N$\\[-0.5cm]
      \begin{align*}
    &y_{k}=\underset{y\in Q}{\text{argmin}}\left\{\inner{\nabla F(x_{k-1})}{y-x_{k-1}}\right\}\\
    &\lambda_{k}= \frac{2}{1+k}\\[0.1cm]
    &\rev{x_{k}}=(1-\lambda_k)\rev{x_{k-1}}+\lambda_k y_{k}
    \end{align*}
  \end{itemize}  
   }}
   \end{center}}
\vspace{.3cm}   

The standard \rev{global convergence guarantee for this method (see e.g.,~\cite[Theorem 1]{jaggi2013revisiting})} is
 \[
F(x_N)-F_*\leq \frac{2LD^2}{N+2},\label{eq:FW_th}
\]
which we compare with the exact bound provided by PEP in \figref{Fig:FW_pep_vs_th} (see \secref{sec:lingramrep}, which shows that CGM  fits into the~\eqref{eq:gen_alg_model} format). The numerical guarantees we obtained by solving the PEP for up to a hundred iterations are between two and three times better than the standard guarantee.

\begin{figure}[!ht]
\begin{center}
\subfigure[Worst-case performance of CGM (red) and its theoretical guarante~\eqref{eq:FW_th} (blue) \rev{for $N\in\{1,\hdots, 100\}$, $L=1$ and $D=1$}.]{\includegraphics[scale=0.3]{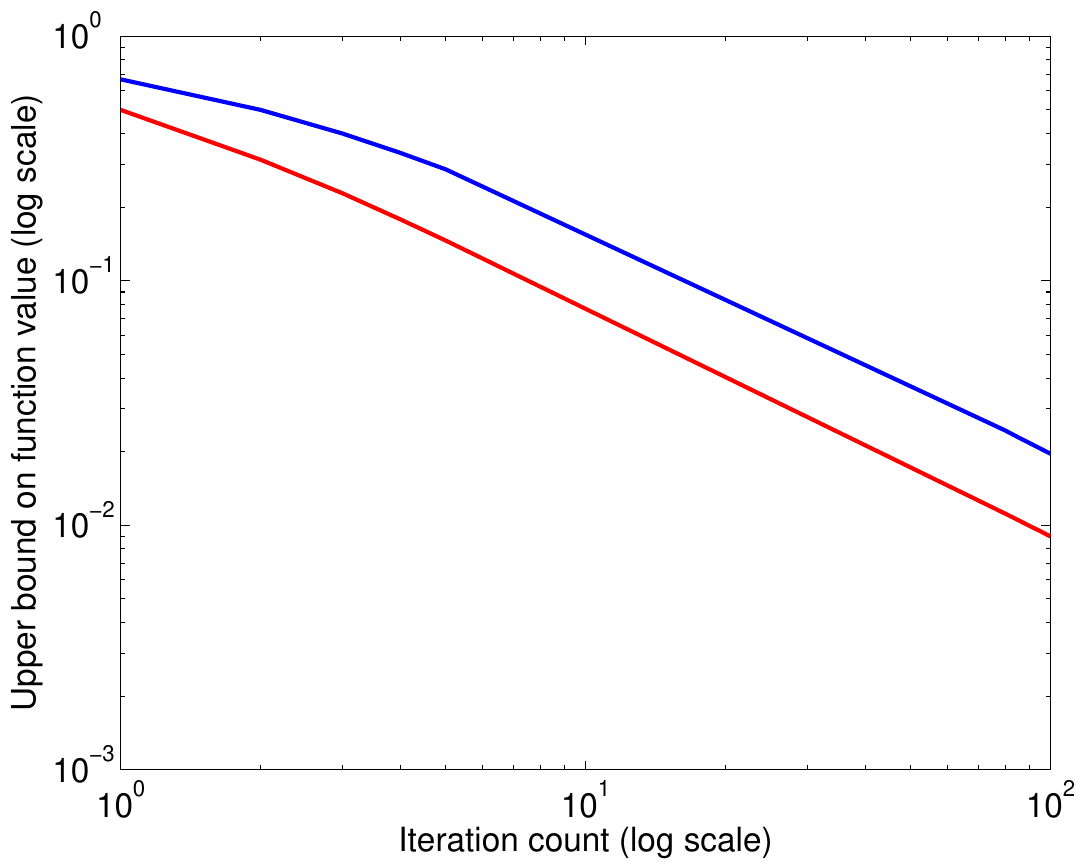}\label{Fig:FW_pep_vs_th}}
\hspace{.1cm}
\subfigure[Worst-case performance of APM (red), DAPM (blue) and lower bound $\frac{R}{\sqrt{N+1}}$ valid for subgradient methods (dashed, black), for \rev{$N\in\{1,\hdots, 100\}$ and $R=1$. 
}. ]{\includegraphics[scale=0.3]{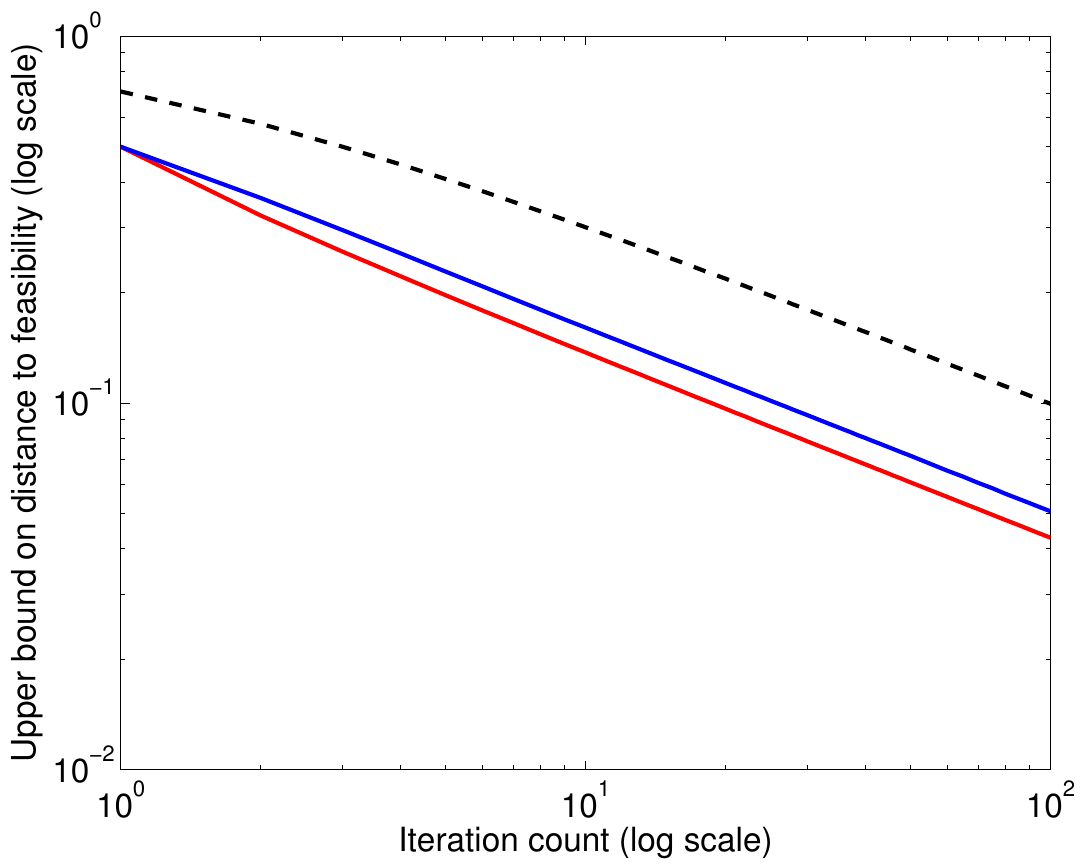}\label{Fig:APMvsDAPM}}
\caption{Numerical analysis of a CGM (left) and of two variants of alternate projections algorithms (right).}
\end{center}
\end{figure}

\subsection{Alternate projection and Dykstra methods}\label{ssec:APM_DAPM}
In this section, we numerically investigate the difference between the worst-case behaviors of the standard alternate projection method (APM) for finding a point in the intersection of two convex sets and the Dykstra~\cite{boyle1986method} method (DAPM) for finding the closest point in the intersection of two convex sets. \revd{APM is a particular instance of subgradient-type descent\footnote{\rev{It can be shown that $\frac{x-\Pi_{Q_k}(x)}{||x-\Pi_{Q_k}(x)||}$ is a subgradient of the function $f(x)$ (at $x$ such that $f(x)=||x-\Pi_{Q_k}(x)||$). Therefore, in the case of two sets $Q_1, Q_2$, and assuming that $x$ is feasible for one of the two sets (say, $Q_1$), a projection onto the other one corresponds to a subgradient step on $f$ with step size $||x-\Pi_{Q_2}(x)||$. Hence, APM is an instance of a subgradient method for $k>1$ (when $x_k$ is feasible for one of the two sets).}} applied to the problem
\begin{equation}
\min_{x\in{\E}}\  \{ f(x)=\max_{i} \normp{x-\Pi_{Q_i}(x)}\},\label{eq:apm}
\end{equation}
whose objective function is convex and nonsmooth (with Lipschitz constant $M=1$)}. Therefore, its expected \rev{global} convergence rate is $\mathcal{O}(\frac{1}{\sqrt{N}})$ (see~\cite[Theorem A.1]{drori2014optimal}). \rev{We compare below the convergence of both APM and DAPM with the standard lower bound for subgradient schemes $\frac{MR}{\sqrt{N+1}}$ as a reference.}
 
 \vspace{.05cm}
{\small \begin{center}

\fbox{
\parbox{0.7\textwidth}{
        \textbf{Alternate projection method (APM)}
  \begin{itemize}
  \item[] Input: $x_0\in\E$, convex sets $Q_1, Q_2\subseteq \E$, $\normp{x_0-x_*}\leq R$, for some $x_*\in Q_1\cap Q_2$. \\[-0.2cm]
  \item[] For $k=1:N$\\[-0.5cm]
      \begin{align*}
    &x_{k}=\Pi_{Q_2}(\Pi_{Q_1}(x_{k-1}))
    \end{align*}
  \end{itemize}  
   }}
   \end{center}}
   {\small \begin{center}

\fbox{
\parbox{0.7\textwidth}{
        \textbf{Dykstra alternate projection method (DAPM)}
  \begin{itemize}
  \item[] Input: $x_0\in\E$, convex sets $Q_1, Q_2\subseteq \E$, $\normp{x_0-x_*}\leq R$, for some $x_*\in Q_1\cap Q_2$. Initialize $p_0=q_0=0$. \\[-0.2cm]
  \item[] For $k=0:N-1$\\[-0.5cm]
      \begin{align*}
    &y_{k}=\Pi_{Q_1}(x_k+p_k)\\
    &p_{k+1}=x_k+p_k-y_k\\
    &x_{i+1}=\Pi_{Q_2}(y_k+\rev{q_k})\\
    &q_{k+1}=y_k+q_k-x_{k+1}
    \end{align*}
  \end{itemize}  
   }}
   \end{center}}
   \vspace{.3cm}
   
The \rev{performance measure} used is $\min_{x\in Q_1} \normp{x-x_N}= \normp{x_N - \Pi_{Q_1}(x_N)}$ (noting that $x_N\in Q_2$ always holds). We do not provide details on the corresponding PEP here, as it is very similar to the previous sections. The results for APM and DAPM are shown in~\figref{Fig:APMvsDAPM}, where the (expected) convergence in $\mathcal{O}(\frac{1}{\sqrt{N}})$ is clearly obtained. Interestingly, DAPM converges slightly slower than APM (more precisely, DAPM has a worst-case about $18\%$ larger
than APM), which is therefore more advisable for finding a point in the intersection of two convex sets (in terms of worst-case performance, when no additional structure is assumed). In addition, note that both APM and DAPM have a worst-case which is about twice  better than the standard lower bound for explicit nonsmooth schemes.

\section{Conclusion}
In this work, we presented a performance estimation approach to analyze first-order algorithms for composite optimization problems. The results of~\cite{taylor2015smooth} were largely extended to handle both larger classes of (composite) objective functions and larger classes of first-order algorithms (also in a more general setting for handling pairs of conjugate norms). 

Our contribution was essentially threefold: first, we developed specific interpolation conditions for different classes of convex and nonconvex functions; then, we exploited those interpolation conditions to formulate the exact worst-case problem for fixed-step linear first-order methods, and finally we applied that methodology to provide tight analyses for different first-order methods. Among others, we presented a new analytical guarantee for the proximal point algorithm that is twice better than previously known and improved the standard \rev{worst-case} guarantee for the conditional gradient method by more than a factor of two. On the way, we also proposed an extension of the optimized gradient method proposed by Kim and Fessler~\cite{kim2014optimized} that incorporates a projection or a proximal operator.

As further research, we believe this methodology should be applied to refine analyses of methods fitting in the context of fixed-step linear first-order methods, and possibly extended to handle dynamic step size rules. To this end, a possibility is to explore convex relaxations of the resulting possibly nonconvex performance estimations problems. As an example, we believe it would be interesting to analyze algorithms involving line-search, such as backtracking or Armijo--Wolfe procedures (a first step in that direction is taken in~\cite{de2016worst}, which study the worst-case behavior of steepest descent with exact line-search). Moreover, it seems to us that the performance estimation approach could be used to \rev{refine the analyses of randomized coordinate descent-type algorithms~\cite{nesterov2012efficiency}}. Performance estimation problems also opened the door for looking toward optimized methods, as proposed by Kim and Fessler~\cite{kim2014optimized} for unconstrained smooth convex minimization.

Finally, algorithmic \revv{analyses} using performance estimation problems are intrinsically limited by our ability to solve semidefinite problems, both numerically (when the number of iterations is large) or analytically (to obtain results valid for any number of iterations). Therefore, any idea leading to (convex) programs that are easier to solve while maintaining reasonable guarantees would be very \rev{advantageous}.

\textbf{Software.} An easy-to-use MATLAB implementation of the approach is available
at \url{https://github.com/AdrienTaylor/Performance-Estimation-Toolbox}.

\newpage
\bibliographystyle{siamplain}
\bibliography{bib_}
\appendix
\section{Proof of upper bound in Theorem~\ref{thm:PPA_conv}}
\label{sec:app_proof_thm_ppa1} 
In order to express the corresponding PEP in the simplest form, we heavily rely on some straightforward simplifications of~\eqref{PEPS:SDP} (see Corollary~\ref{cor:Large_scale_FSLFOM} and Remark~\ref{rem:simp_SDP}). Let us denote by $P_N$ the matrix containing the information harvested after $N$ iterations: $P_N=[g_1 \ g_2 \ \hdots \ g_N \ Bx_0]$  (we use the notation $g_i$ for subgradients $g_i\in\partial F(x_i)$), and by $G_N$ its corresponding Gram matrix (see \secref{sec:GramRep}). Also, we introduce the step size vectors $m_k$ that express each iterate $x_k$ in terms of $x_0$ and the subgradients $\{ g_i \}_{1 \le i \le N}$, that is $x_{k}=P_N m_{k}\ (k=0,\hdots,N).$ Using the standard notation $e_i$ for the unit vector having a single $1$ as its $i$th component, this results in the following explicit expressions for $m_k$: $m_k=e_{N+1} -  \sum_{i=1}^{k} \alpha_i e_i,$ along with $m_0=e_{N+1}$ and $m_*=0$ (where we assumed without loss of generality that $x_*=0$). 

In order to perform the worst-case analysis for PPA, we now formulate the performance estimation problem~\eqref{Intro:dPEP} as the following SDP, the simplified version of~\eqref{PEPS:SDP} where the $x_k$'s ($k=1,\hdots,N$) have been substituted using the equation defining the iterates $x_{k}=x_{k-1}-\alpha_{k}B^{-1}g_{k}$:
\begin{align*}
\max_{G_N\in\mathbb{S}^{N+1}, f_1,\hdots,f_N,f_*\in\mathbb{R}^{N}}  f_N-f_* \tag{PPA-PEP}\label{PEPS:PPA}, \
\text{s.t.  } f_j-f_i + \text{Tr}(A_{ij}G_N) &\leq 0, \quad i,j\in \left\{1,\hdots,N,*\right\}\\
 \normpsq{x_0-x_*} &\leq R^2,\\
 G_N&\succeq 0,
\end{align*}
with matrices $2A_{ij}=e_j(m_i-m_j)^{\top\!}+(m_i-m_j)e_j^{\top\!}$ (where $e_*=0$) coming from the nonsmooth convex interpolation inequalities (see condition~\eqref{eq:ns_cvx_interp}).
In order to obtain an analytical upper bound for PPA, we consider the Lagrangian dual to~\eqref{PEPS:PPA}, which is given by the following:
\begin{align*}
\min_{\lambda_{ij}\geq 0, \tau\geq 0}  \tau R^2 \tag{PPA-dPEP}\label{PEPS:dPPA}
\ \text{s.t.  } e_{N}-\sum_{i}\sum_{j\neq i} (\lambda_{ij}-\lambda_{ji})e_j&=0,\\ \sum_{i}\sum_{j\neq i} \lambda_{ij}A_{ij}+\tau m_0 m_0^{\top\!}&\succeq 0
\end{align*}
\rev{(where the constraint corresponding to $f_*$ can be discarded since it is clear that letting $f_*=0$ does not change the optimal solution of~\eqref{PEPS:PPA}).}
Note that the set of equality constraints can be assimilated to a set of \emph{flow} constraints on a complete directed graph. That is, considering a graph where the optimum and each iterate correspond to nodes, each nonnegative $\lambda_{ij}$ corresponds to the flow on the edge going from node $j$ to node $i$ (we choose this direction by convention). This flow constraint imposes that the outgoing flow equals the ingoing flow for every node, except at the node for final iterate $N$, where the outgoing flow should be equal to $1$, and at the optimum node, where the incoming flow should be equal to $1$.
We show that the following choice is a feasible point of the dual~\eqref{PEPS:dPPA}.
{\begin{align*}
&\lambda_{i,i+1}=\frac{\sum_{k=1}^i \alpha_k}{2\sum_{k=1}^N \alpha_k-\sum_{k=1}^i \alpha_k},\quad &i\in\left\{1,\hdots,N-1\right\},\\
&\lambda_{*,i}=\frac{2\alpha_i \sum_{k=1}^N \alpha_k}{\left(2\sum_{k=1}^N \alpha_k-\sum_{k=1}^i \alpha_k\right)\left(2\sum_{k=1}^N \alpha_k-\sum_{k=1}^{i-1} \alpha_k\right)},\quad &i\in\left\{1,\hdots,N\right\},\\
&\tau=\frac{1}{4\sum_{k=1}^N \alpha_k},
\end{align*}}and $\lambda_{ij}=0$ otherwise. First, we clearly have $\lambda_{ij}\geq 0$ and some basic computations allow us to verify that the equality constraints from~\eqref{PEPS:dPPA} are satisfied:
\begin{align*}
\lambda_{*,1}-\lambda_{1,2} =0, \ \lambda_{*,i}+\lambda_{i-1,i}-\lambda_{i,i+1} =0 \ (i\in\{2,\hdots,N-1\}), \ \lambda_{*,N}+\lambda_{N-1,N} =1.
\end{align*}
It remains to show that the corresponding dual matrix $S$ is positive semidefinite.
{\begin{align*}
2S=\sum_{i=1}^{N-1}&2\alpha_{i+1}\lambda_{i,i+1}e_{i+1}e_{i+1}^{\top\!} +2\tau e_{N+1}e_{N+1}^{\top\!}\\&+\sum_{i=1}^{N}\lambda_{*,i}\left[e_i\left(-e_{N+1}+\sum_{k=1}^i \alpha_ke_k\right)^{\top}+\left(-e_{N+1}+\sum_{k=1}^i \alpha_ke_k\right)e_i^{\top}\right].
\end{align*}}In order to reduce the number of indices to be used, we will note  $\lambda_i=\lambda_{i,i+1}$ and $\mu_i=\lambda_{*,i}$. Then, using the equality constraints, we arrive at the following dual matrix:
{\small \begin{align*}
2S= 
\begin{pmatrix}
2\alpha_1\lambda_1 & \alpha_1\mu_2 & \alpha_1\mu_3 & \hdots & \alpha_1\mu_{N-1} & \alpha_1\mu_N & -\mu_1\\
\alpha_1\mu_2 & 2\alpha_2\lambda_2 & \alpha_2\mu_3 & \hdots & \alpha_2\mu_{N-1} & \alpha_2\mu_N & -\mu_2\\
\alpha_1\mu_3 & \alpha_2\mu_3 & 2\alpha_3\lambda_3 & \hdots & \alpha_3\mu_{N-1} & \alpha_3\mu_N & -\mu_3\\
\vdots &   & \ddots & \ddots &  & \vdots & \vdots\\
\alpha_1\mu_{N-1} & \alpha_2\mu_{N-1} & \alpha_3\mu_{N-1} & \hdots & 2\alpha_{N-1}\lambda_{N-1} & \alpha_{N-1}\mu_N & -\mu_{N-1}\\
\alpha_1\mu_N & \alpha_2\mu_N & \alpha_3\mu_N & \hdots  & \alpha_{N-1}\mu_{N} & 2\alpha_N & -\mu_N\\
-\mu_1 & -\mu_2 & -\mu_3 & \hdots & -\mu_{N-1} & -\mu_N & 2\tau\\
\end{pmatrix}.
\end{align*}}In order to prove $S\succeq 0$, we first use a Schur complement and then show that the resulting matrix is diagonally dominant with positive diagonal elements. After taking the Schur complement (with respect to the lower right scalar component $2\tau$), we obtain the matrix~$\tilde{S}$:
{\small \begin{align*}
\tilde{S}= 
\begin{pmatrix}
2\alpha_1\lambda_1 & \alpha_1\mu_2 & \alpha_1\mu_3 & \hdots & \alpha_1\mu_{N-1} & \alpha_1\mu_N \\
\alpha_1\mu_2 & 2\alpha_2\lambda_2 & \alpha_2\mu_3 & \hdots & \alpha_2\mu_{N-1} & \alpha_2\mu_N \\
\alpha_1\mu_3 & \alpha_2\mu_3 & 2\alpha_3\lambda_3 & \hdots & \alpha_3\mu_{N-1} & \alpha_3\mu_N \\
\vdots &   & \ddots & \ddots &  & \vdots \\
\alpha_1\mu_{N-1} & \alpha_2\mu_{N-1} & \alpha_3\mu_{N-1} & \hdots & 2\alpha_{N-1}\lambda_{N-1} & \alpha_{N-1}\mu_N \\
\alpha_1\mu_N & \alpha_2\mu_N & \alpha_3\mu_N & \hdots  & \alpha_{N-1}\mu_{N} & 2\alpha_N
\end{pmatrix}
-\frac{1}{2\tau}
\begin{pmatrix}
\mu_1\\
\mu_2\\
\vdots\\
\mu_{N}
\end{pmatrix}
\begin{pmatrix}
\mu_1\\
\mu_2\\
\vdots\\
\mu_{N}
\end{pmatrix}^{\top\!}.
\end{align*}}The first step to show the diagonally dominant character of $\tilde{S}$ is to note that every nondiagonal element of $\tilde{S}$ is nonpositive: $\alpha_j\mu_i-\frac{\mu_i\mu_j}{2\tau}\leq 0 \ \forall i\neq j.$
Indeed, this is equivalent to writing this in the following form ($\mu_i> 0$):
{\small \begin{align*}
&\alpha_j-\frac{\mu_j}{2\tau}=\alpha_j\left( \frac{\left(2\sum_{k=1}^N \alpha_k-\sum_{k=1}^i \alpha_k\right)\left(2\sum_{k=1}^N \alpha_k-\sum_{k=1}^{i-1} \alpha_k\right)-\left(2\sum_{k=1}^N \alpha_k\right)^2}{\left(2\sum_{k=1}^N \alpha_k-\sum_{k=1}^i \alpha_k\right)\left(2\sum_{k=1}^N \alpha_k-\sum_{k=1}^{i-1} \alpha_k\right)}\right)\leq 0,
\end{align*}}since $\alpha_k\geq 0$ by assumption. This allows us to discard the absolute values in the diagonal dominance criteria. Then, using the equality constraints, we obtain an expression for the sum of all nondiagonal elements of line $i$ of $\tilde{S}$:
{\begin{align*}
\mu_i\sum_{j=1}^{i-1}\alpha_j + \alpha_i\sum_{j=i+1}^{N}\mu_j&-\frac{\mu_i}{2\tau}\sum_{j\neq i}\mu_j\\&=
 \left\{\begin{array}{ll}
\mu_i \sum_{j=1}^{i-1}\alpha_j + \alpha_i(1-\lambda_i)-\frac{1}{2\tau}\mu_i(1-\mu_i), & \text{ if }i<N, \\
\mu_N \sum_{j=1}^{N-1}\alpha_j-\frac{1}{2\tau}\mu_N(1-\mu_N) & \text{ if }i=N.
\end{array}\right.
\end{align*}}Using the values of $\mu_i$, $\lambda_i$, and $\tau$ along with elementary computations allows to verify that $\forall i\in\left\{1,\hdots,N\right\}$,
\begin{align*}
\left\{\begin{array}{lll}
-(\mu_i \sum_{j=1}^{i-1}\alpha_j + \alpha_i(1-\lambda_i)-\frac{1}{2\tau}\mu_i(1-\mu_i))&=2\alpha_i\lambda_i-\frac{\mu_i^2}{2\tau} & \text{if } i=1,\hdots,N-1, \\
-(\mu_i \sum_{j=1}^{i-1}\alpha_j-\frac{1}{2\tau}\mu_i(1-\mu_i))&=2\alpha_i-\frac{\mu_i^2}{2\tau} & \text{if } i=N,
\end{array}\right.
\end{align*}
which implies diagonal dominance of $\tilde{S}$ (even more: the sum of the elements of each line equals $0$). \qed
\end{document}